\newtheorem{Theorem}{Theorem}[section]
\newtheorem{Lemma}[Theorem]{Lemma}
\newtheorem{Proposition}[Theorem]{Proposition}
\newtheorem{Theoremint}{Theorem}
\theoremstyle{definition}
\newtheorem{Definition}[Theorem]{Definition}
\newtheorem{Definitionint}[Theoremint]{Definition}
\newtheorem{Remark}[Theorem]{Remark}
\newtheorem{Examples}[Theorem]{Examples}
\newtheorem{Example}[Theorem]{Example}
\newcommand{\KK}{{\mathbb K}}
\newcommand{\ZZ}{{\mathbb Z}}
\newcommand{\PP}{{\mathbb P}}
\newcommand{\GG}{{\mathcal G}}
\newcommand{\HH}{{\mathcal H}}
\newcommand{\FF}{{\mathcal F}}
\newcommand{\EE}{{\mathcal E}}
\newcommand{\Ss}{{\mathcal S}}
\newcommand{\OO}{{\mathcal O}}
\newcommand{\Kk}{{\mathcal K}}
\newcommand{\Proj}{\operatorname{Proj}}
\newcommand{\sing}{\operatorname{Sing}}
\newcommand{\Supp}{\operatorname{Supp}}
\newcommand{\rk}{\operatorname{rk}}
\newcommand{\hd}{\operatorname{hd}}
\newcommand{\Hom}{\operatorname{Hom}}
\newcommand{\Ext}{\operatorname{Ext}}
\newcommand{\codim}{\operatorname{codim}}
\newcommand{\coker}{\operatorname{coker}}
\newcommand{\ann}{\operatorname{Ann}}
\begin{document}
\title[A Horrocks' theorem for reflexive sheaves]
{A Horrocks' theorem for reflexive sheaves}

\author[L.\ Costa, S. Marchesi, R.M.\ Mir\'o-Roig]{L.\ Costa, S. Marchesi, R.M.\
Mir\'o-Roig}

\address{Facultat de Matem\`atiques i Inform\`{a}tica,
Departament de Matem\`{a}tiques i Inform\`{a}tica, Gran Via de les Corts Catalanes
585, 08007 Barcelona, SPAIN } \email{costa@ub.edu}

\address{Instituto de Matem\'atica, Estat\'istica e Computa\c{c}\~{a}o Cient\'ifica - UNICAMP, Rua S\'ergio Buarque de Holanda 651, Distr. Bar\~ao Geraldo, CEP 13083-859, Campinas (SP), BRAZIL } \email{marchesi@ime.unicamp.br}

\address{Facultat de Matem\`atiques i Inform\`{a}tica,
Departament de Matem\`{a}tiques i Inform\`{a}tica, Gran Via de les Corts Catalanes
585, 08007 Barcelona, SPAIN } \email{miro@ub.edu}

\date{\today}

\subjclass{Primary 14F05, 14J60}

\begin{abstract} In this paper, we define  $m$-tail reflexive sheaves as reflexive sheaves on  projective spaces with the simplest possible cohomology. We prove that the rank of any $m$-tail reflexive sheaf $\EE$ on $\PP^n$ is greater or equal to $ nm-m$.  We completely describe  $m$-tail reflexive sheaves on $\PP^n$ of minimal rank and we construct huge families of $m$-tail reflexive sheaves of higher rank. \end{abstract}

\maketitle


\section{Introduction}
The study and classification of vector bundles with the simplest possible cohomology has been of great interest for many years. The first and most famous result is due to Horrocks  who proved that a vector bundle $\EE$ on $\PP^n$ without intermediate cohomology  splits into a sum of line bundles (\cite{Hor}). The next case of simplest cohomology for a vector bundle is the one when we have it all \emph{concentrated in one point}, still for the intermediate cohomology groups, i.e $H^i (\EE(\alpha)) =\KK^r$ for $i=n-1$ and a fixed integer $\alpha$ and vanishes elsewhere. Using Beilinson's spectral sequence we get that such bundles are nothing more than a direct sum of  $r$ copies of a twist of the tangent bundle $T_{\PP^n}$, plus possible direct summands of line bundles.

There have been in literature many works whose goal was the generalization of Horrocks theorem for vector bundles on projective varieties $X$ different than $\PP^n$; works that  characterize  vector bundles on $X$ either without intermediate cohomology or splitting into a direct sum of line bundles.

In this work, we want to classify reflexive sheaves with the simplest possible cohomology.
Reflexive sheaves were introduced in 1980 by Hartshorne in \cite{Har,Har2} and since then many progress have been accomplished.  According to Hartshorne's own words, the first reason for studying them is \emph{natural curiosity}; but moreover, they were defined as a new important tool for studying  rank 2 bundles on $\PP^3$ and their moduli spaces, and for the classification of space curves. Recently, the moduli problem of rank 2 reflexive sheaves has been extended to smooth projective threefolds (\cite{Ver}).

In 2008 Abe and Yoshinaga prove that  a reflexive sheaf $\FF$ on $\PP^n$ splits into a direct sum of line bundles if and only if there exists a hyperplane $H \subset \PP^n$ such that $\FF_{|H}$ also splits as a sum of line bundles (\cite{AY}; Theorem 0.2). In 2013 Yau and Ye generalize the splitting criterion to smooth projective varieties (\cite{YY}; Theorem C)  and they  give conditions to ensure that a reflexive sheaf $\EE$ on a Horrocks variety splits into a sum of line bundles and, hence, it is locally free.

The goal of our paper goes on a different direction, not considering the cases that turn out to be locally free sheaves. Indeed, we want to study  \emph{proper} reflexive sheaves, by which we mean that they are not locally free, assuming that they have the simplest possible cohomology. Therefore the first natural question that we should answer is the following one: \emph{what do we mean by the simplest possible cohomology?}

In Section \ref{sec-prel}, after recalling the necessary notation and preliminary results, we will notice that the answer will be obtained giving a closer look at the local to global spectral sequence of the $\Ext$ group. Indeed, we will ask all the intermediate cohomology to vanish except for a ``tail'' of constant dimension $m$ of the cohomology groups $H^{n-1}(\FF(\alpha))$ for sufficiently negative degrees $\alpha\in \ZZ$.
This result has motivated the following definition:

\begin{Definitionint}
Let $\FF$ be a reflexive sheaf on  $\PP^n$. We will call $\FF$ an $m$-tail reflexive sheaf if it satisfies
$$
H^i_*(\FF) = 0 \:\:\mbox{for}\:\: 1\leq i\leq n-2
$$
and
$$
h^{n-1}(\FF(t)) =
\left\{
\begin{array}{ccc}
m & \mbox{if} & t \leq k \\
0 & \mbox{if} & t > k
\end{array}
\right. \:\:\mbox{for some integer}\:\: k.
$$
\end{Definitionint}

We will see that the cohomological requests force the sheaf not to be free at a $0$-dimensional subscheme of length at most $m$.  Notice that, we could not have asked for all the intermediate cohomology to vanish, or else we will still obtain that the sheaf is a direct sum of line bundles.

After proving a lower bound for the rank of an $m$-tail reflexive sheaf on $\PP^n$, we will  focus our attention on  $m$-tail reflexive sheaves of minimal rank;  we will call them {\em minimal} $m$-tail reflexive sheaves and denote them by $\Ss_m$. We will show that  for any hyperplane $H\subset \PP^n$ avoiding the singular locus of $\Ss_m$ we have ${\Ss_m}_{|H}\cong (T_{H})^m$.
 This result is extremely useful in many ways. First of all, it tells us that the sheaves $\Ss_m$ are the clear generalization of the tangent bundles, in the sense that we have the simplest cohomology possible, beyond the one given by the sum of line bundles.

  In Section 3 we will prove the results which will give us the complete description of the minimal tail reflexive sheaves. Such description
 will be achieved proving the following steps. First we prove that every $\Ss_m$ is constructed by iterative extensions of $\Ss_1$, which we will call \emph{chain of extensions} of $\Ss_1$'s, see Proposition 3.6.
 We then prove the main structure result of the paper:
\begin{Theoremint}
Let $\Ss_m$ be a minimal $m$-tail reflexive sheaf with $s$ different singular points $p_1, \cdots,p_s$. Then,
\[\Ss_m= \oplus_{i=1}^{s} \Ss_{n_i} \]
where $\Ss_{n_i}$ is a minimal $n_i$-tail reflexive sheaf with a unique singular point $p_i$.  Moreover,  $m=n_1+\cdots+n_s$.
\end{Theoremint}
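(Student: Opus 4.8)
\noindent\emph{Proof strategy.} The plan is to argue by induction on the number $s$ of singular points, at each step splitting off the reflexive summand ``concentrated over $p_1$''. The case $s=1$ is the assertion with $n_1=m$, so there is nothing to do. For the inductive step it is enough to exhibit an isomorphism $\Ss_m\cong\GG\oplus\GG'$, where $\GG$ is a minimal $n_1$-tail reflexive sheaf whose only singular point is $p_1$ and $\GG'$ is a minimal $(m-n_1)$-tail reflexive sheaf singular exactly at $p_2,\dots,p_s$; applying the inductive hypothesis to $\GG'$ then yields the decomposition, and the equality $m=n_1+\cdots+n_s$ falls out of the bookkeeping below, with each $n_i\ge1$.

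The tool is the chain of extensions of Proposition 3.6:
\[
0=\EE_0\subset\EE_1\subset\cdots\subset\EE_m=\Ss_m,\qquad \EE_j/\EE_{j-1}\cong\Ss_1^{(q_j)},
\]
each graded piece a minimal $1$-tail reflexive sheaf with its single singular point $q_j\in\{p_1,\dots,p_s\}$, and each $\EE_j$ a minimal $j$-tail reflexive sheaf. Since an extension of two sheaves locally free at a point is again locally free there, and since the length of the non--locally free locus is additive in extensions and equals $1$ at the singular point of $\Ss_1$, exactly $n_i:=\#\{j:q_j=p_i\}\ge1$ of the graded pieces are singular over $p_i$, and $n_1+\cdots+n_s=m$. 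I will use the following \emph{splitting lemma}, proved in the next paragraph: if $\EE$ is a minimal tail reflexive sheaf sitting in an exact sequence $0\to A\to\EE\to B\to0$ with $A,B$ reflexive and $\sing A\cap\sing B=\emptyset$, then the sequence splits. Granting it, I first reorganize the chain: if $q_j\neq q_{j+1}$, the lemma applied to the minimal $2$-tail reflexive sheaf $\EE_{j+1}/\EE_{j-1}$ shows $\EE_{j+1}/\EE_{j-1}\cong\Ss_1^{(q_j)}\oplus\Ss_1^{(q_{j+1})}$, which lets me interchange these two graded pieces; sorting, I may assume $q_1=\cdots=q_{n_1}=p_1$ and $q_j\neq p_1$ for $j>n_1$. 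Then set $\GG:=\EE_{n_1}$ (a minimal $n_1$-tail reflexive sheaf, singular only at $p_1$ since all its graded pieces are) and $\GG':=(\Ss_m/\GG)^{\vee\vee}$. From Proposition 3.6 one checks that $\GG'$ is a minimal $(m-n_1)$-tail reflexive sheaf singular exactly at $p_2,\dots,p_s$ and that $0\to\GG\to\Ss_m\to\GG'\to0$ is exact; a last application of the splitting lemma gives $\Ss_m\cong\GG\oplus\GG'$, closing the induction.

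It remains to prove the splitting lemma. Let $e\in\Ext^1(B,A)$ be the class of the extension and consider the exact sequence coming from the local--to--global spectral sequence for $\mathcal{E}xt$:
\[
0\longrightarrow H^1\!\big(\mathcal{H}om(B,A)\big)\longrightarrow\Ext^1(B,A)\overset{\rho}{\longrightarrow} H^0\!\big(\mathcal{E}xt^1(B,A)\big).
\]
It suffices to prove $\rho(e)=0$ and $H^1(\mathcal{H}om(B,A))=0$, for then $e=0$. The section $\rho(e)$ records the local extension classes at the points where $B$ fails to be locally free (elsewhere the sequence is automatically locally split), that is at $\sing B$; since $A$ is a trivial bundle near each such point, $\rho(e)=0$ is equivalent to $\EE$ being locally there isomorphic to $A\oplus B$. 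This is where minimality is decisive: a minimal tail reflexive sheaf must carry the simplest possible non-free module at each of its singular points --- any nonsplit local complication would force a larger rank or a longer tail --- so, by the local description of minimal $1$-tail reflexive sheaves, the local modules of $\EE$ and of $B$ at each point of $\sing B$ are ``free $\oplus$ copies of the local module of $\Ss_1$'', and comparing ranks shows $\EE$ is locally $A\oplus B$ there; hence $\rho(e)=0$. For the remaining term, restrict to a general hyperplane $H$ avoiding $p_1,\dots,p_s$: then $A|_H\cong (T_H)^a$ and $B|_H\cong (T_H)^b$ for suitable $a,b$ (up to a common twist, which we absorb), so $\mathcal{H}om(B,A)|_H\cong\mathcal{H}om(T_H,T_H)^{\oplus ab}$, whose first cohomology vanishes by the rigidity of $T_{\PP^{n-1}}$; combining this with the vanishing of the intermediate cohomology of the sheaves in play and restricting to $H$, one obtains $H^1(\mathcal{H}om(B,A))=0$. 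Therefore $e=0$ and $\EE\cong A\oplus B$.

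The heart of the argument, and its main obstacle, is the \emph{local} half of the splitting lemma: one must show that minimality of the ambient reflexive sheaf forces its local module at each singular point to decompose as a free module plus a ``minimal singular'' module --- a local rigidity statement that is the local shadow of the classification of minimal $1$-tail reflexive sheaves, and which in turn underlies the reorganization of the chain. The \emph{global} vanishing $H^1(\mathcal{H}om(B,A))=0$ is more routine once one passes to a general hyperplane and uses the cohomology of $\Omega^1_{\PP^{n-1}}\otimes T_{\PP^{n-1}}$, but still needs the usual control of twists and intermediate cohomology. A subsidiary technical point is verifying, via Proposition 3.6, that the terms $\EE_j$ and the sheaves $\GG,\GG'$ really are minimal tail reflexive sheaves of the asserted ranks and tails fitting into the short exact sequences used.
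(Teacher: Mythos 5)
Your overall architecture --- induct, and reduce everything to a splitting statement for an extension whose two pieces have disjoint singular loci --- mirrors the paper's, which inducts on $m$ rather than $s$ and whose engine is Proposition \ref{extdifpoint}. But your proof of that splitting lemma has a genuine gap exactly where you flag it. To get $\rho(e)=0$ you assert that minimality forces the stalk of a minimal tail reflexive sheaf at each singular point to be ``free $\oplus$ copies of the local module of $\Ss_1$''. This is false: the paper's classification of minimal $m$-tail sheaves with a single singular point identifies $\mathcal{E}xt^1(\Ss_m,\OO_{\PP^n})$ with the structure sheaf of an \emph{arbitrary} fat point of length $m$, which for $m\ge 2$ is non-reduced and in particular not $\OO_p^{\oplus m}$; no such local rigidity holds. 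Even the weaker statement you actually need --- that the extension class dies in each stalk $\Ext^1_{\OO_x}(B_x,A_x)$ --- is precisely the lemma localized, and ``comparing ranks'' does not prove it. The paper sidesteps local analysis entirely: it dualizes the extension, notes that the connecting map $\Ss_i^{\lor}\to\mathcal{E}xt^1(\Ss_j,\OO_{\PP^n})$ must vanish because $h^0(\mathcal{E}xt^1(\FF,\OO_{\PP^n}))=i+j$ (this is where minimality of the \emph{total} sheaf enters, via Lemma \ref{singpoint}) while the $\mathcal{E}xt^1$ sheaves, being supported on points, have no higher cohomology; the disjointness of supports then splits the sequence of $\mathcal{E}xt^1$'s, and a diagram chase recovers the block-diagonal form of the defining matrix. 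You should replace your local argument by this global $h^0$ count, or supply a proof of the local splitting you invoke.

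Two secondary problems. First, your reorganization of the chain needs each $\EE_j$ and each subquotient $\EE_{j+1}/\EE_{j-1}$ to be a \emph{minimal tail} sheaf before the splitting lemma can be applied to it; Proposition \ref{extensions1} only guarantees this for the successive quotients $\Ss_m/\EE_j$, and the example immediately preceding Proposition \ref{extdifpoint} shows that a chain of extensions of $\Ss_1$'s need not be a minimal tail, so these assertions require separate justification (this is what the paper's Proposition on the matrices $C_i$ and $\HH_i$ is for). Second, $H^1(\mathcal{H}om(B,A))=0$ does not follow from restricting to a single hyperplane $H$ and the rigidity of $T_{\PP^{n-1}}$: one must control all negative twists (and the failure of $\mathcal{H}om(B,A)$ to be locally free at the singular points) to climb back from $H$ to $\PP^n$. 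These are repairable; the unproved local splitting is the real gap.
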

  We therefore study the matrices which give us a minimal tail reflexive sheaf singular at only one point, see Proposition 3.10 and Theorem 3.11. The last results translates our problem
  to the classification of fat points of length $m$ in the projective space.

In Section 4 we will focus on the non-minimal case. We will define a new family of $m$-tail reflexive sheaf that we will call \emph{level}, see Definition \ref{deflevel}, and we completely describe these sheaves, relating them with the minimal case.
The classification of {\em all} $m$-tail reflexive sheaves is out of reach and we will conclude the paper outlying the difficulties that we should overcome if we want to study the general case.
	\vspace{3mm}

\noindent \textbf{Acknowledgments}
All authors were partially supported by the CAPES-DGPU Proj. 7508/14-0. The first and third author are partially supported by  MTM2016-78623-P.
 The second author is partially supported by Fapesp Grant n. 2017/03487-9. The authors would like to thank the Mathematical Institute of the Universitat de Barcelona (IMUB) and the Universidade Estadual de Campinas (IMECC-UNICAMP) for the hospitality and for providing the best working conditions, and Marcos Jardim for many useful discussions.


\section{Background}\label{sec-prel}

We will work on an algebraically closed field $\KK$ of arbitrary characteristic. Given the projective space $\PP^n =\Proj(\KK[x_0,\ldots,x_n])$ and a coherent sheaf $\FF$ on $\PP^n$, we will denote the twisted sheaf $\FF \otimes \OO_{\PP^n}(l)$ by $\FF(l)$. We will write $\FF^\lor= \mathcal{H}om(\FF,\OO_{\PP^n})$ for the dual sheaf and, as usual, $H^i(\PP^n,\FF)$, or simply $H^i(\FF)$, will denote the cohomology group with dimensions $h^i(\FF)$. Given $V$ a vector space, we will denote by $V^*$ its dual. We will use the standard notation for the graded $\KK[x_0,\ldots,x_n]$-module $H^i_*(\PP^n,\FF) = \oplus_{l \in \ZZ} H^i(\PP^n,\FF(l))$.  Throughout the paper, given a matrix whose entries are linear forms, we will call \emph{change of coordinates} a finite number of elementary transformations on the rows and columns of the matrix, combined also with a change of basis on $\PP^n$.\\
We will now recall the main definitions and results used throughout the paper.

\begin{Definition}
A coherent sheaf $\FF$ on a projective variety $X$ is called reflexive if the canonical morphism $\FF \rightarrow \FF^{\lor\lor}$ is an isomorphism.
\end{Definition}
The \emph{singular locus} of a coherent sheaf $\FF$ on a smooth projective variety $X$, denoted as $\sing(\FF)$, is the set of points where $\FF$ fails to be locally free, and it is known that
$$
\begin{array}{rl}
\sing(\FF) & = \left\{ x \in X \: |\: \FF_x \:\mbox{is not a free}\: \OO_{X,x}-\mbox{module} \right\}\vspace{2mm}\\
& = \bigcup_{p=1}^{\dim X} \Supp \mathcal{E}xt^p(\FF,\OO_X)
\end{array}
$$
where $\Supp$ stands the scheme-theoretic support of the sheaf (see \cite{Ok}, Chapter 2, Lemma 1.4.1). In the particular case where $\FF$ is reflexive, we have that $\codim_X(\sing(\FF))\geq 3$ (see \cite{Har}, Corollary 1.4).
\\

From now on, we will be  interested on proper reflexive sheaves, i.e. reflexive sheaves not locally free.

\vspace{3mm}

A \emph{resolution of length $d$} of a coherent sheaf $\FF$ over  $\PP^n$ is defined as an exact sequence
$$
0\rightarrow \mathcal{L}_d \rightarrow \cdots \rightarrow \mathcal{L}_1 \rightarrow \mathcal{L}_0 \rightarrow \FF \rightarrow 0
$$
where  $\mathcal{L}_i$ splits as a direct sum of line bundles, and the minimal number of the length of such resolutions is called the \emph{homological dimension} of $\FF$ and denoted by $\hd(\FF)$.\\
Another important tool that we will use is the so called \emph{spectral sequence of global and local} $\Ext$, which states the following (see for instance \cite{W}, Section 5.8).
\begin{Theorem}
Let $\FF$, $\mathcal{G}$ be quasi coherent sheaves of $\OO_{\PP^n}$-modules. Then there is a spectral sequence with $E_2$-term $H^j(\mathcal{E}xt^i(\FF,\mathcal{G}))$ that converges to $\Ext^{i+j}(\FF,\mathcal{G})$.
\end{Theorem}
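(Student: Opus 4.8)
The plan is to realize this as a special case of Grothendieck's spectral sequence for a composition of functors. Recall the general statement: if $\mathcal{A} \xrightarrow{F} \mathcal{B} \xrightarrow{G} \mathcal{C}$ are left exact additive functors between abelian categories, $\mathcal{A}$ and $\mathcal{B}$ have enough injectives, and $F$ sends injective objects of $\mathcal{A}$ to $G$-acyclic objects of $\mathcal{B}$, then for every object $A$ of $\mathcal{A}$ there is a convergent first-quadrant spectral sequence $E_2^{i,j} = (R^j G)\big((R^i F)(A)\big) \Longrightarrow R^{i+j}(G \circ F)(A)$. I would apply this with $\mathcal{A} = \mathcal{B}$ the category of $\OO_{\PP^n}$-modules (which has enough injectives), $\mathcal{C}$ the category of $\KK$-vector spaces, $F = \mathcal{H}om_{\OO_{\PP^n}}(\FF, -)$, and $G = \Gamma(\PP^n, -)$; both $F$ and $G$ are left exact.

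The next step is to identify all the derived functors in sight. By definition $R^i F = \mathcal{E}xt^i(\FF, -)$ and $R^j G = H^j(\PP^n, -)$. There is a natural isomorphism of functors $G \circ F = \Gamma\big(\PP^n, \mathcal{H}om(\FF, -)\big) \cong \Hom_{\OO_{\PP^n}}(\FF, -)$, whence $R^{i+j}(G \circ F) = \Ext^{i+j}(\FF, -)$. Substituting $A = \mathcal{G}$ into Grothendieck's spectral sequence yields exactly $E_2^{i,j} = H^j\big(\mathcal{E}xt^i(\FF, \mathcal{G})\big) \Longrightarrow \Ext^{i+j}(\FF, \mathcal{G})$, as asserted — provided the acyclicity hypothesis holds.

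The only point requiring a genuine argument, and hence the main (modest) obstacle, is verifying that $F$ carries injectives to $G$-acyclics: concretely, that $\mathcal{H}om(\FF, \mathcal{I})$ is flasque (hence $\Gamma$-acyclic) whenever $\mathcal{I}$ is an injective $\OO_{\PP^n}$-module. For this I would fix an open $U \subseteq \PP^n$ with inclusion $j\colon U \hookrightarrow \PP^n$ and use the extension-by-zero functor $j_!$, which is left adjoint to restriction $j^\ast = (-)|_U$, together with the canonical monomorphism $j_!(\FF|_U) \hookrightarrow \FF$. Applying $\Hom_{\OO_{\PP^n}}(-, \mathcal{I})$ and invoking the injectivity of $\mathcal{I}$ gives a surjection $\Hom_{\OO_{\PP^n}}(\FF, \mathcal{I}) \twoheadrightarrow \Hom_{\OO_{\PP^n}}(j_!(\FF|_U), \mathcal{I}) = \Hom_{\OO_U}(\FF|_U, \mathcal{I}|_U) = \Gamma\big(U, \mathcal{H}om(\FF, \mathcal{I})\big)$, which is precisely the statement that the restriction map $\Gamma(\PP^n, \mathcal{H}om(\FF, \mathcal{I})) \to \Gamma(U, \mathcal{H}om(\FF, \mathcal{I}))$ is surjective; since flasqueness can be checked on any open cover this shows $\mathcal{H}om(\FF, \mathcal{I})$ is flasque, and flasque sheaves have vanishing higher cohomology. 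If one prefers to stay within quasi-coherent sheaves, one uses that on the noetherian scheme $\PP^n$ an injective quasi-coherent sheaf is also injective as an $\OO_{\PP^n}$-module, so the same argument applies. Assembling these observations proves the theorem; alternatively, one may simply cite \cite{W}, Section 5.8.
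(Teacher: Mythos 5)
Your proof is correct and is essentially the argument the paper delegates to its reference: the paper offers no proof of this statement, only the citation to \cite{W}, Section 5.8, and that source derives it exactly as you do, as the Grothendieck spectral sequence for $\Gamma(\PP^n,-)\circ \mathcal{H}om(\FF,-)$, with the acyclicity hypothesis checked by showing $\mathcal{H}om(\FF,\mathcal{I})$ is flasque via the adjunction between $j_!$ and restriction. The only cosmetic slip is the phrase ``flasqueness can be checked on any open cover'': what you actually use (and what suffices) is that surjectivity of $\Gamma(\PP^n,-)\to\Gamma(U,-)$ for every open $U$ already implies surjectivity of all restrictions $\Gamma(V,-)\to\Gamma(U,-)$ for $U\subseteq V$.
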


Choose $\mathcal{G}=\OO_{\PP^n}$. By looking closely at the second sheet of the latter spectral sequence, we observe that the $0$-row is given by the cohomology of the dual sheaf. We have that the simplest sheet and therefore the simplest convergence will be given if the only other non vanishing row is the one given by the cohomology of $\mathcal{E}xt^1(\FF,\OO_{\PP^n})$. \\

As noticed before, since
 $\mathcal{E}xt^{i}(\FF,\OO_{\PP^n})=0$ for $i>1$,
the support of such sheaf defines the singular locus of $\FF$  and it seems natural that the simplest case will be given when $\FF$ fails to be locally free at a finite set of points. Indeed, with the singular locus supported on points, the cohomology groups $H^j(\mathcal{E}xt^1(\FF,\OO_{\PP^n}))$ vanish for any $j>0$, simplifying even more the spectral sequence.  The only non zero value, given by $h^0(\mathcal{E}xt^1(\FF,\OO_{\PP^n}))$, will test how much the sheaf fails to be locally free. Motivated by the previous observations and recalling that, by Serre duality, $\Ext^j(\FF,\OO_{\PP^n}(-n-1))\simeq H^{n-j}(\FF)^{*}$ , we propose Definition \ref{m-tail} as the definition of proper reflexive sheaves with the simplest possible cohomology.

Specifically, we introduce a family of sheaves with no intermediate cohomology except for a ``tail'' of constant dimension $m$ for the $(n-1)$-th cohomology group. As we have just observed such sheaves represent from the cohomological point of view the simplest case of sheaves which are not locally free, because, as we will show soon (Lemma \ref{singpoint}), if all the intermediate cohomology vanishes, the sheaf is locally free and, by Horrocks, a sum of line bundles.

\begin{Definition}\label{m-tail}
Let $\FF$ be a reflexive sheaf on  $\PP^n$. We will call $\FF$ an {\em m-tail reflexive sheaf} if it satisfies
\begin{itemize}\item[(i)]
$H^i_*(\FF) = 0 \:\:\mbox{for}\:\: 1\leq i\leq n-2$, and
\item[(ii)]$
h^{n-1}(\FF(t)) =
\left\{
\begin{array}{ccc}
m & \mbox{if} & t \leq k \\
0 & \mbox{if} & t > k
\end{array}
\right. \:\:\mbox{for some integer}\:\: k.
$
\end{itemize}

We say that an $m$-tail reflexive sheaf is {\em normalized} if $k=-n-1$. Tensoring by an appropriate line bundle
, any $m$-tail reflexive sheaf can be normalized and, from now on, we will always assume that our $m$-tail reflexive sheaves are normalized.
\end{Definition}

\begin{Example}\label{examples}
(a) Any non-zero general global section  $\sigma \in H^0(T_{\PP^n}(-1))$ gives rises to an exact sequence:
$$0\rightarrow \OO_{\PP^n}(1) \stackrel{\sigma }{\rightarrow} T_{\PP^n} \rightarrow \FF \rightarrow 0$$
where $\FF$ is a 1-tail reflexive sheaf on $\PP^n$ (\cite{Ok} Example 1.1.13).

(b) Let $p_1,\cdots ,p_m$ be a set of general points in $\PP^n$ and let $\ell _i^1,\cdots ,\ell_i^n$ be $\KK$-linearly independent linear forms passing through $p_i$, $1\le i \le m$.  Set $\FF:=\coker (M^t)$ where $$
M =
\left[
\begin{array}{cccccccccccc}
M^1 & 0 & 0 & \cdots & 0\\
0 & M^2 & 0 & \ldots  & 0\\
\vdots & & \ddots & & \vdots\\
0 & 0& \cdots & 0 & M^m
\end{array}
\right]
$$
and $M^i=\left[
\begin{array}{cccccccccccc} \ell _i^1 & \cdots & \ell _i^n\end{array}
\right]
$. $\FF$ fits into a short exact sequence $$0\rightarrow \OO_{\PP^n}^m \stackrel{M^t}{\rightarrow}  \OO_{\PP^n}(1)^{nm} \rightarrow \FF \rightarrow 0$$
and we easily check that it is an $m$-tail reflexive sheaf on $\PP^n$.

(c)
Let $\FF$ be a rank  8 reflexive sheaf on $\PP^3$ defined by the following short exact sequence
$$
0\rightarrow \OO_{\PP^3}^2 \oplus \OO_{\PP^3}(2)^2 \stackrel{M^t}{\rightarrow}  \OO_{\PP^3}(1)^{6} \oplus \OO_{\PP^3}(3)^{6} \rightarrow \FF \rightarrow 0,
$$
where $M$ is given by
$$
M =
\left[
\begin{array}{ccccccccccccccccccc}
x_0 & x_1 & x_2 & x_3 & 0 & 0 & 0 & 0 & 0 &0 &0 &0\\
0 & 0 & x_0 & x_1 & x_2 & x_3 & 0 & 0 & 0 & 0 &0 &0\\
0 & 0 & 0 & 0 & 0 & 0 & x_0-x_1 & x_2 & x_3 & 0 &0 &0\\
0 & 0 & 0 & 0 & 0 & 0& 0 & 0 & 0 & x_0+x_1 & x_2 &x_3
\end{array}
\right].
$$
It is easy to check that $\FF$ is a $2$-tail reflexive sheaf on $\PP^3$. Indeed, $\FF$ is the direct sum of a rank 4 vector bundle $\FF_1$ plus two 1-tail sheaves, whose ``tail'' starts in degree -5. Using Macaulay2, see \cite{Mac2}, we check that $h^2(\FF_1(-4))=h^2(\FF_1(-3))=2$ and zero elsewhere.

(d)
Let $\FF$ be a rank 18 reflexive sheaf on $\PP^3$ defined by the following short exact sequence
$$
0\rightarrow \OO_{\PP^3}^3 \oplus \OO_{\PP^3}(2)^2 \oplus \OO_{\PP^3}(3)^3 \stackrel{M^t}{\rightarrow}  \OO_{\PP^3}(1)^{9} \oplus \OO_{\PP^3}(3)^{8} \oplus  \OO_{\PP^3}(4)^{9} \rightarrow \FF \rightarrow 0,
$$
where $M$ is given by
$$
M =
\left[
\begin{array}{c|c|c}
M_1 & 0 & 0 \\
\hline
0 & M_2 &0\\
\hline
0 & 0 & M_3
\end{array}
\right],
$$
with
$$
M_1=
\left[
\begin{array}{ccccccccccccccccccc}
x_0 & x_1 & x_2 & x_3 & 0 & 0 & 0 & 0 & 0 \\
0 & 0 & x_3 & 0 & x_0 & x_1 & x_2 & 0 & 0 \\
0 & 0 & 0 & 0 & x_3 & 0 & x_0 & x_1 & x_2
\end{array}
\right],
$$
$$
M_2=
\left[
\begin{array}{ccccccccccccccccccc}
x_0 & x_1 & x_2 & x_3 & 0 & 0 & 0 & 0\\
 0 & 0 & 0 & 0 & x_0 & x_1 & x_2 & x_3
\end{array}
\right]
$$
and
$$
M_3=
\left[
\begin{array}{ccccccccccccccccccc}
x_0 & x_1 & x_2 & 0 & 0 & 0 & 0 & 0 & 0\\
 0 & 0 & 0 & x_0 & x_1 & x_2 & 0 & 0 & 0 \\
 0 & 0 & 0 & 0 & 0& 0 & x_0 & x_1 & x_2
\end{array}
\right].
$$
It is easy to check that $\FF$ is a $3$-tail reflexive sheaf on $\PP^3$.

As in part (c), $\FF$ is the direct sum of a  bundle $\FF_1$ given by $M_1$, plus two copies of the tangent bundle given by $M_2$, plus the direct sum of three 1-tail sheaves, given by $M_3$. Using Macaulay2, we compute the cohomology of the bundle $\FF _1$ and we conclude that $\FF$ is indeed a 3-tail (decomposable) reflexive sheaf.
\end{Example}

By the cohomological properties $(i)$ and $(ii)$ of Definition \ref{m-tail},  it follows from \cite{BHH}, Proposition 1.4 that an  $m$-tail reflexive sheaf $\FF$ has homological dimension $\hd(\FF)=1$. In addition, if $\FF$ is normalized its  minimal resolution has the following shape
\begin{equation}\label{resmtail}
0 \rightarrow \bigoplus_{i=1}^s \OO_{\PP^n} (a_i) \stackrel{A}{\rightarrow} \bigoplus_{j=1}^{q} \OO_{\PP^n} (b_j) \rightarrow \FF \rightarrow 0
\end{equation}
with $a_1 \leq a_2 \leq \ldots \leq a_s$ and $b_1 \leq b_2 \leq \ldots \leq b_q$. Since $\FF$ is normalized from the fact that $h^{n-1}(\FF(-n-1))=m$ and $h^{n-1}(\FF(t))=0$ for any $t \geq -n$ we get that $a_1=0$.
Moreover if, for some $j$, $b_j \leq 0$, we have that $\OO_{\PP^n}(b_j)$ is automatically a direct summand of $\FF$, adding no information at the $(n-1)$-th cohomology group of the sheaf. Therefore, we can restrict our attention to the case $b_j \geq 1$ for any $j$, $1 \leq j \leq q$. Finally, by minimal we mean that if an entry of the matrix $A$ is given by a polynomial of degree $b_j - a_i \leq 0$, then that entry is equal to zero.

\begin{Lemma}\label{singpoint}
Let $\FF$ be an $m$-tail reflexive sheaf on $\PP^n$ with a minimal free resolution
\begin{equation}\label{seq1}
0 \rightarrow \bigoplus_{i=1}^s \OO_{\PP^n} (a_i) \stackrel{A}{\rightarrow} \bigoplus_{j=1}^{q} \OO_{\PP^n} (b_j) \rightarrow \FF \rightarrow 0.
\end{equation} Then,  $\sing(\FF)\subset \PP^n$ is a 0-dimensional scheme of length at most $m$.
In particular, if $m=0$, then $\FF$ has no intermediate cohomology and it splits as a direct sum of line bundles.
\end{Lemma}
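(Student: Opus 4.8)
The plan is to read off everything from the local-to-global spectral sequence of $\Ext$ recalled in Section~\ref{sec-prel}, applied to $\FF$ against the line bundles $\OO_{\PP^n}(t)$, $t\in\ZZ$. From the minimal resolution \eqref{seq1}, which has length one, we get $\mathcal{E}xt^i(\FF,\OO_{\PP^n})=0$ for $i\ge 2$, so, setting $\GG:=\mathcal{E}xt^1(\FF,\OO_{\PP^n})$, the formula for the singular locus recalled in Section~\ref{sec-prel} gives $\sing(\FF)=\Supp\GG$. The spectral sequence therefore has only the two rows $i=0$ and $i=1$; it degenerates at $E_2$ and yields, for every $p\ge1$ and every $t$, a short exact sequence
$$0\longrightarrow H^{p-1}\!\big(\GG(t)\big)\longrightarrow \Ext^p\!\big(\FF,\OO_{\PP^n}(t)\big)\longrightarrow H^p\!\big(\FF^\lor(t)\big)\longrightarrow 0 .$$
By Serre duality $\Ext^p(\FF,\OO_{\PP^n}(t))\cong H^{n-p}(\FF(-t-n-1))^{*}$, so hypothesis~$(i)$ on the intermediate cohomology of $\FF$ turns into a vanishing statement for the leftmost terms $H^{p-1}(\GG(t))$. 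We may assume $n\ge3$, since for $n\le2$ a reflexive sheaf on $\PP^n$ is locally free and the claim reduces to Horrocks' theorem.

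First I would show that $\sing(\FF)$ is $0$-dimensional. For $2\le p\le n-1$ we have $1\le n-p\le n-2$, so $(i)$ and Serre duality give $\Ext^p(\FF,\OO_{\PP^n}(t))=0$ for all $t$; hence, by the exact sequence above, $H^j(\GG(t))=0$ for all $t$ and all $1\le j\le n-2$. On the other hand $\FF$ reflexive forces $\codim_{\PP^n}\sing(\FF)\ge3$, so $d:=\dim\Supp\GG\le n-3\le n-2$. If $d\ge1$, then $H^d(\GG(t))\ne0$ for $t\ll0$ (the converse of Grothendieck vanishing), contradicting what we just obtained. Hence $\GG$ is supported on a $0$-dimensional subscheme (or vanishes), i.e. $\sing(\FF)$ is $0$-dimensional.

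Next I would bound its length. As $\GG$ has finite length, $h^j(\GG(t))=0$ for $j\ge1$ and $h^0(\GG(t))=\operatorname{length}(\GG)$ for all $t$. Taking $p=1$, $t=0$ in the exact sequence and using $\Ext^1(\FF,\OO_{\PP^n})\cong H^{n-1}(\FF(-n-1))^{*}$ together with $(ii)$ (recall $k=-n-1$ since $\FF$ is normalized) gives
$$\operatorname{length}\big(\sing(\FF)\big)=h^0(\GG)\ \le\ \dim_\KK\Ext^1(\FF,\OO_{\PP^n})=h^{n-1}(\FF(-n-1))=m .$$
If moreover $m=0$, this forces $\GG=0$, hence $\mathcal{E}xt^i(\FF,\OO_{\PP^n})=0$ for every $i\ge1$ and $\FF$ is locally free; since $(ii)$ now reads $H^{n-1}_{*}(\FF)=0$, the sheaf $\FF$ has no intermediate cohomology and splits into a sum of line bundles by \cite{Hor}.

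The manipulations with the (degenerate) two-row spectral sequence and with Serre duality are routine bookkeeping; the one point I expect to need care is the converse of Grothendieck vanishing invoked above, namely that a coherent sheaf whose support has dimension $d\ge1$ cannot have $H^d$ of all its twists vanish. I would prove this via Serre duality: for $t\ll0$ one has $h^d(\GG(t))=\dim_\KK\Ext^{\,n-d}(\GG,\OO_{\PP^n}(-t-n-1))=h^0\!\big(\mathcal{E}xt^{\,n-d}(\GG,\OO_{\PP^n})(-t-n-1)\big)$, and $\mathcal{E}xt^{\,n-d}(\GG,\OO_{\PP^n})$ is a nonzero sheaf — its support is the $d$-dimensional part of $\Supp\GG$ — so this is positive once the twist is large enough.
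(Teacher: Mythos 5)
Your overall strategy --- reading everything off the local-to-global $\Ext$ spectral sequence --- is exactly the paper's, but your spectral sequence bookkeeping contains a genuine error that breaks both halves of the argument. Writing $\GG=\mathcal{E}xt^1(\FF,\OO_{\PP^n})$ as you do: a first-quadrant spectral sequence supported on the two rows $q=0,1$ does \emph{not} degenerate at $E_2$, since the differentials $d_2\colon H^{p-1}(\GG(t))\to H^{p+1}(\FF^\lor(t))$ survive; moreover, in the abutment the bottom-row term $H^{p}(\FF^\lor(t))$ is the \emph{sub}object of $\Ext^{p}(\FF,\OO_{\PP^n}(t))$ and the top-row term is the \emph{quotient}, so your short exact sequence has the two ends interchanged. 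What one actually gets is the long exact sequence
$$
\cdots\to H^{p}(\FF^\lor(t))\to\Ext^{p}(\FF,\OO_{\PP^n}(t))\to H^{p-1}(\GG(t))\xrightarrow{\ d_2\ }H^{p+1}(\FF^\lor(t))\to\Ext^{p+1}(\FF,\OO_{\PP^n}(t))\to\cdots
$$
(this is precisely the five-term sequence the paper writes for $p=1$, where the $d_2$ into $H^2(\FF^\lor)$ is visibly present). Two consequences. First, from $\Ext^{p}(\FF,\OO_{\PP^n}(t))=0$ for $2\le p\le n-1$ you may only conclude that $H^{j}(\GG(t))\cong H^{j+2}(\FF^\lor(t))$ for $1\le j\le n-3$ (and injects for $j=n-2$), not that these groups vanish; so the contradiction with the converse of Grothendieck vanishing is not available as stated. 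Second, at $p=1$, $t=0$ one gets $0\to H^1(\FF^\lor)\to\Ext^1(\FF,\OO_{\PP^n})\to H^0(\GG)\to H^2(\FF^\lor)\to 0$, hence only $h^0(\GG)\le m+h^2(\FF^\lor)$, and you have not shown $H^2(\FF^\lor)=0$.

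Both defects are repaired by the single device the paper uses: run the $p=1$ piece of the (correct) sequence after twisting, i.e.\ replace $\OO_{\PP^n}$ by $\OO_{\PP^n}(\alpha)$ with $\alpha\ll0$. Serre vanishing kills $H^1(\FF^\lor(-\alpha))$ and $H^2(\FF^\lor(-\alpha))$, so $h^0(\GG(-\alpha))=h^{n-1}(\FF(\alpha-n-1))=m$ for all $\alpha\ll0$. Since a coherent sheaf whose support has dimension $\ge1$ has $h^0(\GG(s))$ unbounded as $s\to+\infty$, this constancy alone forces $\Supp\GG=\sing(\FF)$ to be $0$-dimensional of length at most $m$, and it gives the length bound simultaneously; the detour through higher cohomology of $\GG$ --- including the converse of Grothendieck vanishing, which is the step you flagged as delicate but is actually fine --- becomes unnecessary. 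Your treatment of the $m=0$ case is correct once the above is fixed.
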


\begin{proof}
Directly from resolution (\ref{seq1}), we obtain that $\mathcal{E}xt^p(\FF(\alpha),\OO_{\PP^n}) =0$ for each $p \geq 2$ and each  integer $\alpha$.
Applying $\mathcal{H}om(-,\OO_{\PP^n})$ to the resolution (\ref{seq1}) of the $m$-tail sheaf $\FF$, we get the exact sequence
\begin{equation}\label{dualF}
0 \rightarrow \FF^\lor \rightarrow \bigoplus_{j=1}^{q} \OO_{\PP^n} (-b_j) \rightarrow \bigoplus_{i=1}^{s} \OO_{\PP^n} (-a_i) \rightarrow \EE xt^1(\FF,\OO_{\PP^n}) \rightarrow 0
\end{equation}
and we deduce that
$$
\sing(\FF)  =  \bigcup_{p=1}^{n} \Supp \mathcal{E}xt^p(\FF,\OO_{\PP^n}) =   \Supp \mathcal{E}xt^1(\FF,\OO_{\PP^n}).
$$
Therefore the low degree terms of second sheet of the  spectral sequence of the local-global $\Ext $ (all the other ones vanish) are given by, using Serre duality in the bottom line,
$$
\begin{array}{|c|c|c|c|c|}
\hline
H^0(\mathcal{E}xt^1(\FF(\alpha),\OO_{\PP^n})) & H^1(\mathcal{E}xt^1(\FF(\alpha),\OO_{\PP^n})) & H^2(\mathcal{E}xt^1(\FF(\alpha),\OO_{\PP^n})) & \cdots & H^n(\mathcal{E}xt^1(\FF(\alpha),\OO_{\PP^n}))\\
\hline
H^0(\FF^\lor(-\alpha)) & H^1(\FF^\lor(-\alpha)) & H^2(\FF^\lor(-\alpha)) & \cdots & H^n(\FF^\lor(-\alpha))\\ \hline
\end{array}
$$
whose convergence gives the exact sequence of cohomology groups
$$
0 \rightarrow H^1(\FF^\lor(-\alpha)) \rightarrow H^{n-1}(\FF(\alpha-n-1))^* \rightarrow H^0(\mathcal{E}xt^1(\FF(\alpha),\OO_{\PP^n}))\rightarrow
$$
$$
 \rightarrow H^2(\FF^\lor(-\alpha))) \rightarrow H^{n-2}(\FF(\alpha-n-1))^*=0.
$$
Using Serre's vanishing theorem,  for $\alpha \ll 0$, $$h^0(\mathcal{E}xt^1(\FF(\alpha),\OO_{\PP^n}))=h^0(\mathcal{E}xt^1(\FF,\OO_{\PP^n})\otimes\OO_{\PP^n}(-\alpha))$$ is constantly equal to $m$, which implies that $h^0( \EE xt^1(\FF(\alpha),\OO_{\PP^n}))=m$ for every integer $\alpha$ and the support of $\mathcal{E}xt^1(\FF,\OO_{\PP^n})$ is a 0-dimensional scheme of length at most $m$.
Hence we can conclude that $\sing(\FF)$ is a  0-dimensional scheme of length at most $m$.
If $m=0$, i.e. $\FF$ does not have intermediate cohomology, we obtain that there are no points in the singular locus. Hence, $\FF$ is locally free and, by Horrocks, it splits into a direct sum of line bundles.
\end{proof}

\section{Minimal $m$-tail reflexive sheaves}

We start the section  determining the minimal rank of $m$-tail reflexive sheaves.
\begin{Lemma}\label{lemmarank}
Let $\FF$ be a reflexive $m$-tail sheaf on $\PP^n$. Then $\rk \FF \geq (n-1)m$.
\end{Lemma}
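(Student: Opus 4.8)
The plan is to analyze the minimal free resolution \eqref{resmtail} of a normalized $m$-tail reflexive sheaf $\FF$ and extract the rank inequality from two sources of information: the constraints coming from the tail of the $(n-1)$-st cohomology, and the constraints coming from the fact that $\Ss=\mathcal{E}xt^1(\FF,\OO_{\PP^n})$ is a length-$\le m$ sheaf on a $0$-dimensional scheme (Lemma~\ref{singpoint}). Recall from the discussion after \eqref{resmtail} that we may assume $a_1=0$ and $b_j\ge 1$ for all $j$, and that $\rk\FF = q-s$ where $s$ and $q$ are the numbers of summands in the two free modules.

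First I would twist \eqref{seq1} by $\OO_{\PP^n}(t)$ and take the long exact sequence in cohomology. Because $\FF$ has no intermediate cohomology in degrees $1,\dots,n-2$, we get for $t\le k=-n-1$ an exact piece
$$
0\to H^{n-1}(\textstyle\bigoplus_i\OO_{\PP^n}(a_i+t)) \to H^{n-1}(\textstyle\bigoplus_j\OO_{\PP^n}(b_j+t)) \to H^{n-1}(\FF(t))\to H^n(\textstyle\bigoplus_i\OO_{\PP^n}(a_i+t))\to H^n(\textstyle\bigoplus_j\OO_{\PP^n}(b_j+t))\to 0.
$$
Evaluating this at a sufficiently negative $t$ (say $t\ll 0$) and at $t=-n$, and using that $h^{n-1}(\OO_{\PP^n}(d))=0$ while $h^n(\OO_{\PP^n}(d))=\binom{-d-1}{n}$, one reads off $m$ as an alternating combination of binomial coefficients in the $a_i$ and $b_j$; in particular comparing $t=-n-1$ (where the tail is $m$) with $t=-n$ (where it is $0$) pins down the leading behaviour. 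The key numerical consequence I expect to distill is a bound of the form $\sum_j (\text{number of }b_j\text{ equal to }1) \ge$ something forcing many $b_j=1$, together with the relation $q-s=\rk\FF$.

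Next I would bring in the dual sequence \eqref{dualF}: $\FF^\lor$ sits inside $\bigoplus_j\OO_{\PP^n}(-b_j)$ with cokernel mapping onto $\Ss=\mathcal{E}xt^1(\FF,\OO_{\PP^n})$, a sheaf of length $\ell\le m$ supported at points. Sheafifying/localizing at a singular point $p$, the map $A^t: \bigoplus\OO(-b_j)\to\bigoplus\OO(-a_i)$ drops rank exactly there, and the local structure of a length-$\ell$ module that is a quotient of a free module by the image of a matrix of linear forms (since the nonzero entries of $A$ of positive degree are forced, by normalization with $a_1=0$, $b_j\ge1$, to be linear when $b_j=1$) forces at least $n$ columns of degree-shift $1$ per ``unit of length'': this is where a minimal-resolution count of a fat point of length $1$ in $\PP^n$ (needing $n$ linear generators of the presentation, cf. Example~\ref{examples}(b)) enters. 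Combining "the tail of size $m$ needs $\ge$ some number of degree-$1$ generators $b_j=1$" with "each such generator is matched against an $a_i$" yields $q \ge s + (n-1)m$, i.e. $\rk\FF=q-s\ge (n-1)m$.

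The main obstacle I anticipate is controlling the \emph{higher-degree} summands $b_j\ge 2$ and $a_i\ge 1$: a priori $\FF$ could carry a large locally free direct summand with no cohomology (a sum of line bundles of degrees $\ge 1$ among the $b_j$, balanced by equal $a_i$), and these contribute nothing to $m$ yet do contribute to the rank — so they only help the inequality, but one must make sure they do not corrupt the bookkeeping that links $m$ to the count of $b_j=1$. Concretely, the delicate step is to argue that, after the automatic splittings already removed (the $b_j\le 0$ case), the minimality of the resolution forces the degree-$1$ part of $A$ to genuinely govern the tail, so that the $m$ copies of $\KK$ in $H^{n-1}(\FF(-n-1))$ are "paid for" by at least $(n-1)m$ net degree-$1$ columns. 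I would handle this by restricting to a generic hyperplane $H$ (as is done later in the paper to identify $\Ss_m|_H\cong T_H^m$) and inducting on $n$, using that $T_{\PP^{n-1}}$ has rank $n-1$ as the base mechanism; the hyperplane restriction kills the point-supported $\mathcal{E}xt^1$ in the sense needed and turns the statement into a cleaner rank count on $\PP^{n-1}$.
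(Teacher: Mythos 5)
Your starting point (the minimal free resolution \eqref{resmtail} with $a_1=0$, $b_j\ge 1$, and the long exact cohomology sequence) is the same as the paper's, but the decisive numerical step is missing. Comparing $t=-n-1$ with $t=-n$, as you propose, only yields that exactly $m$ of the $a_i$ are $0$. What actually produces the bound is comparing with $t=-n-2$: since the tail stays \emph{constant} at $m$ there, and $h^n(\OO_{\PP^n}(-n-2))=n+1$, the exact sequence
$0\to H^{n-1}(\FF(-n-2))\to H^n(\oplus_i\OO(a_i-n-2))\to H^n(\oplus_j\OO(b_j-n-2))\to H^n(\FF(-n-2))\to 0$
gives $m-m(n+1)-\alpha+\beta-h^n(\FF(-n-2))=0$, hence $\beta\ge nm+\alpha$, where $\alpha,\beta$ count the summands with $\tilde a_i=1$, $b_j=1$. (Note also that your displayed sequence incorrectly terminates after $H^n(\oplus_j\OO(b_j+t))$; the term $H^n(\FF(t))$ must be kept, though it enters with a favourable sign.) The mechanism you substitute for this count --- that each unit of length of $\mathcal{E}xt^1(\FF,\OO_{\PP^n})$ forces $n$ columns of linear forms --- is false as stated: the length-$2$ module $\OO_{\PP^n}/(x_0^2,x_1,\dots,x_{n-1})$ is presented by $n$ columns of which one is quadratic, so ``length $\ell$ requires $n\ell$ linear columns'' fails, and nothing in the local structure of a point-supported module of given length recovers the bound. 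It really is the global condition $h^{n-1}(\FF(-n-2))=m$ that pays for the $nm$ copies of $\OO_{\PP^n}(1)$.

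There is a second, smaller gap in the final bookkeeping. From $\beta\ge nm+\alpha$ and ``$m$ copies of $\OO_{\PP^n}$ among the $a_i$'' one gets $\rk\FF=q-s\ge (n-1)m+(r-k)$, where $r,k$ count the remaining summands $\OO_{\PP^n}(\tilde b_j)$, $\OO_{\PP^n}(\tilde a_i)$ with $\tilde a_i\ge 1$; you still must show $r\ge k$. This is exactly the ``higher-degree summands'' issue you flag as the main obstacle, but the paper disposes of it without any hyperplane restriction or induction on $n$: by minimality the block of the presentation matrix mapping $\bigoplus\OO_{\PP^n}(\tilde a_i)$ to $\OO_{\PP^n}(1)^{nm}$ has entries of degree $\le 0$ and hence vanishes, so the matrix is block triangular, the induced map $\bigoplus\OO_{\PP^n}(\tilde a_i)\to\bigoplus\OO_{\PP^n}(\tilde b_j)$ is injective, and the snake lemma exhibits $\FF$ as an extension of the rank-$(n-1)m$ sheaf $\coker(\OO_{\PP^n}^m\to\OO_{\PP^n}(1)^{nm})$ by a sheaf of nonnegative rank. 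Your proposed detour through $T_{\PP^{n-1}}$ and induction is not needed and is not carried out.
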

\begin{proof}
Recall that normalizing the sheaf, if necessary,   we can assume that
\begin{equation}\label{startcohom}
h^{n-1}(\FF(-n-1))=m \:\: \mbox{and} \:\: h^{n-1}(\FF(-n))=0.
\end{equation}
From the exact sequence (\ref{resmtail}) we induce the following one in cohomology
$$
0 \rightarrow H^{n-1}(\FF(-n-1)) \rightarrow \bigoplus_i H^n(\OO_{\PP^n} (a_i-n-1)) \rightarrow \bigoplus_j H^n(\OO_{\PP^n} (b_j-n-1)) = 0
$$
and the conditions (\ref{startcohom}) force to have $m$ trivial summands in the $\bigoplus_i \OO_{\PP^n} (a_i)$'s, i.e.
$$
\bigoplus_i \OO_{\PP^n} (a_i) = \OO_{\PP^n}^m \oplus \bigoplus_{i} \OO_{\PP^n} (\tilde{a}_i) \text{ with } \tilde{a}_i > 0.
$$
In the same way, from the equality $h^{n-1}(\FF(-n-2))=m$ and the minimality of (\ref{resmtail}), we must have
$$
\bigoplus_i \OO_{\PP^n} (a_i) = \OO_{\PP^n}^m \oplus \bigoplus_{i=1}^{k} \OO_{\PP^n} (\tilde{a}_i) \text{ with } \tilde{a}_i\geq 1, \text{ and }
$$
$$
\bigoplus_j \OO_{\PP^n} (b_j) = \OO_{\PP^n}(1)^{nm} \oplus \bigoplus_{j=1}^r \OO_{\PP^n} (\tilde{b}_j) \text{ with } \tilde{b}_j \geq 1.
$$
Indeed we have that, denoting by $\alpha$ the number of summands with $\tilde{a}_i=1$ and by $\beta$ the ones with $\tilde{b}_j=1$,
$$
h^{n-1}(\FF(-n-2))) - h^n(\OO_{\PP^n}(-n-2)^m) - h^n(\OO_{\PP^n}(-n-1)^\alpha) + h^n(\OO_{\PP^n}(-n-1)^\beta) - h^n(\FF(-n-2))=0,
$$
which gives us $m-m(n+1)  - \alpha + \beta \geq 0$ and hence $\beta \geq mn +\alpha \geq mn$.\\

Therefore, we obtain the following resolution of the sheaf $\FF$,
\begin{equation}\label{finalres}
0 \rightarrow \OO_{\PP^n}^m \oplus \bigoplus_{i=1}^k \OO_{\PP^n} (\tilde{a}_i) \rightarrow \OO_{\PP^n}(1)^{nm} \oplus \bigoplus_{j=1}^r \OO_{\PP^n} (\tilde{b}_j) \rightarrow \FF \rightarrow 0.
\end{equation}
Again by minimality of (\ref{finalres}),  the sheaf $\FF$ will be constructed as an extension of the sheaf defined as the cokernel of $ \OO_{\PP^n}^m {\rightarrow} \OO_{\PP^n}(1)^{nm}$.  Indeed, it is possible to fit $\FF$ in the following commutative diagram
$$
\xymatrix{
 & 0 \ar[d] & 0  \ar[d]& 0 \ar[d]& \\
 0 \ar[r] &  \bigoplus_{i=1}^k \OO_{\PP^n} (\tilde{a}_i) \ar[r]^{C^t} \ar[d] &  \bigoplus_{j=1}^q \OO_{\PP^n} (\tilde{b}_j) \ar[r] \ar[d] & \mathcal{H} \ar[r] \ar[d] & 0 \\
 0 \ar[r] & \OO_{\PP^n}^m \oplus \bigoplus_{i=1}^k \OO_{\PP^n} (\tilde{a}_i) \ar[r]^{M^t} \ar[d] & \OO_{\PP^n}(1)^{nm} \oplus \bigoplus_{j=1}^r \OO_{\PP^n} (\tilde{b}_j) \ar[r] \ar[d] & \FF \ar[r] \ar[d] & 0 \\
 0 \ar[r] & \OO_{\PP^n}^m \ar[r]^{A^t} \ar[d] & \OO_{\PP^n}(1)^{nm} \ar[r] \ar[d] & \mathcal{G} \ar[r] \ar[d] & 0\\
 & 0 & 0 & 0
}
$$
Such diagram is constructed considering its first two lines, which define commutative squares because of the degrees of the line bundles involved in the direct summands and the third line follows applying the Snake Lemma.\\
Finally, we get that $\rk \FF \geq (n-1)m$.
\end{proof}

This last Lemma motivates the following definition:

\begin{Definition}Let $\FF$ be an $m$-tail reflexive sheaf on $\PP^n$. We will say that $\FF$ is a {\em minimal} $m$-tail reflexive sheaf if it has rank  $(n-1)m$ and we will  denote it by $\Ss_m$.
\end{Definition}

\begin{Remark}\label{remarkiso}  (i) Notice that if $\FF$ is a minimal $m$-tail reflexive sheaf, by the minimality of the rank, it can not have a line bundle as a direct summand.

(ii) It is straightforward from the proof of Lemma \ref{lemmarank} that any minimal $m$-tail reflexive sheaf $\Ss_m$ has a resolution of the following type
\begin{equation}\label{resmin}
0 \rightarrow \OO_{\PP^n}^m  \stackrel{A_m^t}{\rightarrow} \OO_{\PP^n}(1)^{nm} \rightarrow \Ss_m \rightarrow 0.
\end{equation}
In the particular case $m=1$ we can suppose to have, after changing basis, the following resolution
$$
0 \rightarrow \OO_{\PP^n}  \stackrel{
{\tiny
\left[
\begin{array}{c}
x_0\\
x_1\\
\vdots\\
x_{n-1}
\end{array}
\right]}}{\rightarrow} \OO_{\PP^n}(1)^{n} \rightarrow \Ss_1 \rightarrow 0.
$$

(iii) The direct sum $\Ss _{m_1}\oplus \Ss _{m_2}$ of an $m_1$-tail reflexive sheaf and  an $m_2$-tail reflexive sheaf is an $(m_1+m_2)$-tail reflexive sheaf.
Even more, it will follow from Theorem \ref{split} that if $\Ss_m$ is  a minimal $m$-tail reflexive sheaf with $2$ different singular points $p_1$ and $p_2$ then $\Ss_m= \Ss_{m_1}\oplus \Ss_{m_2}$ with $m=m_1+m_2$,
 $\sing(\Ss_{m_1})= \{ p_1\}$ and $\sing(\Ss_{m_2})= \{ p_2\}$.

(iv) On the other hand, not all minimal $m$-tail reflexive sheaves split as a direct sum of minimal tail sheaves. For example, set $\FF:=\coker (M^t)$ where $$
M =
\left[
\begin{array}{cccccc}
x & y & z & 0 & 0 & 0 \\
t & 0 & 0 & x  & y & z\\
\end{array}
\right]
$$
 $\FF$ fits into a short exact sequence $$0\rightarrow \OO_{\PP^3}^2 \stackrel{M^t}{\rightarrow}  \OO_{\PP^3}(1)^{6} \rightarrow \FF \rightarrow 0$$
and we easily check that it is a $2$-tail reflexive sheaf on $\PP^3$ which does not split.
\end{Remark}

We will now demonstrate a result that will be extremely useful throughout this paper.
\begin{Proposition}\label{tangent}
Let $H\simeq \PP^{n-1}$ be a hyperplane of $\PP^n$ which does not meet $\sing(\Ss_m)$. Then ${\Ss_m}_{|H} \simeq T^m_{\PP^{n-1}}$, i.e., $\Ss_m$ restricts to $m$ copies of the tangent bundle on $\PP^{n-1}$.
\end{Proposition}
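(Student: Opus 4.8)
The plan is to restrict the minimal resolution of $\Ss_m$ to $H$ and then recognise the restricted bundle through its cohomology, using the Beilinson description (recalled in the Introduction) of bundles on projective space whose intermediate cohomology is concentrated in a single group. Since $H$ avoids $\sing(\Ss_m)$, the sheaf $\Ss_m$ is locally free in a neighbourhood of $H$, so $\mathcal{T}or^{\OO_{\PP^n}}_1(\Ss_m,\OO_H)=0$ and restricting the resolution (\ref{resmin}) gives an exact sequence $0\to\OO_H^m\xrightarrow{\overline A}\OO_H(1)^{nm}\to E\to 0$ on $\PP^{n-1}$, where $E:={\Ss_m}_{|H}$. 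The entries of $\overline A$ being linear forms, this resolution is again minimal, so $E$ is a vector bundle of rank $(n-1)m=\rk T^m_{\PP^{n-1}}$, resolved exactly as $T^m_{\PP^{n-1}}$ is by the Euler sequence. Thus it suffices to prove that $E$ has the same cohomology as $T^m_{\PP^{n-1}}$, namely $H^i_*(E)=0$ for $1\le i\le n-3$ and $h^{n-2}(E(t))=m$ for $t=-n$ and $0$ otherwise: granting this, the Beilinson argument forces $E\cong T^m_{\PP^{n-1}}\oplus\bigoplus_j\OO_H(c_j)$, and comparing ranks kills the line bundle summands, giving $E\cong T^m_{\PP^{n-1}}$.

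For the cohomology of $E$ I would use the restriction sequence $0\to\Ss_m(t-1)\xrightarrow{\ell}\Ss_m(t)\to E(t)\to0$ on $\PP^n$ ($\ell$ the linear form defining $H$, exact for the same $\mathcal{T}or$ reason) together with the cohomology of the normalized $m$-tail sheaf $\Ss_m$. From $H^i_*(\Ss_m)=0$ for $1\le i\le n-2$ one obtains at once $H^i_*(E)=0$ for $1\le i\le n-3$, and the piece of the long exact sequence around degree $n-2$ identifies $H^{n-2}(E(t))$ with $\ker\big(H^{n-1}(\Ss_m(t-1))\xrightarrow{\ell}H^{n-1}(\Ss_m(t))\big)$; this kernel is $0$ for $t\ge-n+1$, and equals $\KK^m$ for $t=-n$ because $h^{n-1}(\Ss_m(-n-1))=m$ and $h^{n-1}(\Ss_m(-n))=0$. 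What is left — and what I expect to be the real obstacle — is the vanishing for $t\le-n-1$, equivalently the injectivity of multiplication by $\ell$ on $H^{n-1}(\Ss_m(t))$ in all sufficiently negative degrees.

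To settle this I would exploit that the non-local-freeness of $\Ss_m$ is concentrated away from $H$: the sheaf $\mathcal{E}xt^1(\Ss_m,\OO_{\PP^n})$ is supported on $\sing(\Ss_m)\subseteq\PP^n\setminus H$, so multiplication by $\ell$ is an isomorphism $\mathcal{E}xt^1(\Ss_m,\OO_{\PP^n})\xrightarrow{\sim}\mathcal{E}xt^1(\Ss_m,\OO_{\PP^n})(1)$ (it is an isomorphism on $D_+(\ell)$, which contains the support), hence also on $H^0(\mathcal{E}xt^1(\Ss_m,\OO_{\PP^n})(l))$ for every $l$. By Serre duality $H^{n-1}(\Ss_m(t))^{*}\cong\Ext^1(\Ss_m,\OO_{\PP^n}(-n-1-t))$, and in the functorial exact sequence produced by the local-to-global spectral sequence of $\Ext$ used in the proof of Lemma \ref{singpoint},
\[
0\to H^1(\Ss_m^{\lor}(l))\to\Ext^1(\Ss_m,\OO_{\PP^n}(l))\xrightarrow{\pi_l}H^0(\mathcal{E}xt^1(\Ss_m,\OO_{\PP^n})(l))\to H^2(\Ss_m^{\lor}(l))\to0,
\]
the third term carries a bijective $\ell$-action, while for $l\ge0$ the flanking terms $H^1(\Ss_m^{\lor}(l))$ and $H^2(\Ss_m^{\lor}(l))$ have equal dimension (vanishing for $l\gg0$ by Serre) and $\dim\Ext^1(\Ss_m,\OO_{\PP^n}(l))=m=h^0(\mathcal{E}xt^1(\Ss_m,\OO_{\PP^n})(l))$. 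Chasing the resulting commutative ladder — and using that $H^{n-2}_*(E)$, $E$ being a bundle on $\PP^{n-1}$, is automatically of finite length, so the desired vanishing is free for $t\ll0$ and only has to be propagated upward — one concludes that multiplication by $\ell$ is bijective on $\Ext^1(\Ss_m,\OO_{\PP^n}(l))$ for all $l\ge0$, equivalently on $H^{n-1}(\Ss_m(t))$ for all $t\le-n-2$. This is the step that costs the most; once it is in place, Beilinson and the rank count yield ${\Ss_m}_{|H}\cong T^m_{\PP^{n-1}}$. (As a sanity check, the case $m=1$ is transparent: after a change of coordinates making $\sing(\Ss_1)=[0:\dots:0:1]$, the matrix of (\ref{resmin}) is $(x_0,\dots,x_{n-1})^t$, and its restriction to any $H$ through none of these coordinate issues is precisely the Euler map of $\PP^{n-1}$.)
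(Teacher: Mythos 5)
Your reduction is sound up to the point you yourself flag as the crux, and there the argument does not close. Restricting the resolution, identifying $H^{n-2}(E(t))$ with $\ker\bigl(\ell\colon H^{n-1}(\Ss_m(t-1))\to H^{n-1}(\Ss_m(t))\bigr)$, and invoking the Horrocks/Beilinson classification plus a rank count are all fine; the entire content is the injectivity of $\ell$ for $t\le -n-1$, equivalently the surjectivity of $\ell\colon \Ext^1(\Ss_m,\OO_{\PP^n}(l))\to\Ext^1(\Ss_m,\OO_{\PP^n}(l+1))$ for $l\ge 0$. The proposed ladder chase cannot deliver this. In the four-term sequence the composite $\Ext^1(\Ss_m,\OO(l))\to\Ext^1(\Ss_m,\OO(l+1))\to H^0(\mathcal{E}xt^1(l+1))$ equals $\pi_l$ followed by the isomorphism on $H^0(\mathcal{E}xt^1)$, so its rank is $m-h^1(\Ss_m^{\lor}(l))$; nothing in your data (equal dimensions of the flanking terms, bijective $\ell$-action on the middle, vanishing for $l\gg0$) rules out $h^1(\Ss_m^{\lor})=1$ and $h^1(\Ss_m^{\lor}(l))=0$ for $l\ge1$, in which case $\ell\colon\Ext^1(\Ss_m,\OO)\to\Ext^1(\Ss_m,\OO(1))$ has rank $m-1$ and $H^{n-2}(E(-n-1))\neq0$. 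Nor does the finite length of $H^{n-2}_*(E)$ help: downward propagation in $l$ is exactly the non-formal direction. Concretely, $\Ext^1_*(\Ss_m,\OO)=\coker\bigl(R(-1)^{nm}\stackrel{A}{\to}R^m\bigr)$ is generated in degree $0$, so modulo $\ell$ it is a graded $\bar R$-module generated in degree $0$ of finite length, which may perfectly well be nonzero in degree $1$ and zero in high degrees (compare $\KK[x]/(x^2)$); its degree-$1$ piece is $H^0(\Ss_m^{\lor}(1)_{|H})$, and that vanishing is precisely what has to be proved.

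So the missing ingredient is the pair of vanishings $H^1(\Ss_m^{\lor})=0$ and $H^0(\Ss_m^{\lor}(1))=0$, which is the substantive part of the paper's proof (obtained there from the dualized resolution, the equality $\coker H^0(A)=H^0(\mathcal{E}xt^1(\Ss_m,\OO_{\PP^n}))$, and Serre duality applied to $\ker H^0(A(1))$). Once you have $H^1(\Ss_m^{\lor})=0$, the map $\pi_0$ is an isomorphism, $\ell$ is bijective in degree $0\to1$, and generation in degree $0$ handles all higher twists — at which point your Beilinson route does go through and is a genuine alternative to the paper's (which instead uses \cite{AM} to show the restricted matrix has no zero column and can be put in block-diagonal Euler form). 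As written, however, the hardest step is asserted rather than proved.
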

\begin{proof}
We can assume without loss of generality that the hyperplane is defined as $H := \{x_n=0\}$ and we consider the restriction of the resolution of $\Ss_m$ to this hyperplane
\begin{equation}\label{resrestriction}
0 \rightarrow \OO_{\PP^{n-1}}^m \stackrel{{A_m^t}_{|H}}{\rightarrow} \OO_{\PP^{n-1}}(1)^{nm} \rightarrow {\Ss_m}_{|H} \rightarrow 0.
\end{equation}
    After a possible change of basis, we can prove that the $(m \times nm)$-matrix ${A_m}_{|H}$ has a column of zeros if and only if $H^0({\Ss_m^\lor(1)}_{|H}) \neq 0$. In fact,
     as shown in \cite[Lemma 2.10]{AM}, $h^0({\Ss_m^\lor(1)}_{|H}) = p \neq 0$ if and only if ${\Ss_m(-1)}_{|H}$ splits as $\Ss' \oplus \OO_{{\PP^{n-1}}}^p$, which  implies that, after a change of basis if necessary, $p$ columns of ${{A_m}_{|H}} $ are equal to zero.  \\
Let us see that $H^0({\Ss_m^\lor(1)}_{|H}) = 0$.  To this end, considering the exact sequence
\begin{equation}\label{e1}
0 \rightarrow \Ss_m^\lor \rightarrow \Ss_m^\lor(1) \rightarrow \Ss_m^\lor(1)_{|H} \rightarrow 0
\end{equation}
it is enough to see that $H^1({\Ss_m^\lor}) = H^0({\Ss_m^\lor(1)}) = 0.$ Let us see first that $H^1({\Ss_m^\lor}) =0$.

Dualizing the resolution of $\Ss_m$ we get the following diagram
$$
\xymatrix{
0\ar[r] & \Ss_m^\lor \ar[r] & \OO_{\PP^n}(-1)^{nm} \ar[rr]^{A} \ar[dr] & & \OO_{\PP^n}^m \ar[r] & \mathcal{E}xt^1(\Ss_m,\OO_{\PP^n}) \ar[r] & 0\\
& & & K_1 \ar[ur] \ar[dr]\\
& & 0\ar[ur] & & 0
}
$$
and the following exact sequence in cohomology
$$
H^0\left(\OO_{\PP^n}(-1)^{nm}\right) \stackrel{H^0(A)}{\rightarrow} H^0\left(\OO_{\PP^n}^m\right) \rightarrow \coker H^0(A) \rightarrow 0
$$
with $\coker H^0(A) \subset H^0(\mathcal{E}xt^1(\Ss_m,\OO_{\PP^n}))$.  Since $H^0\left(\OO_{\PP^n}(-1)^{nm}\right)=0$, we have $ \dim \coker H^0(A) = m$. On the other hand, since $\Ss_m$ is an $m$-tale reflexive sheaf  $h^0(\mathcal{E}xt^1(\Ss_m,\OO_{\PP^n}))=m$. Therefore $\coker H^0(A)= H^0(\mathcal{E}xt^1(\Ss_m,\OO_{\PP^n}))$ which implies that  $h^0(K_1) =0$. Hence, from the above diagram we get $h^1(\Ss^\lor_m) =0$.\\

Now consider the dual sequence tensored by the hyperplane bundle
$$
\xymatrix{
0\ar[r] & \Ss_m^\lor(1) \ar[r] & \OO_{\PP^n}^{nm} \ar[rr]^{A(1)} \ar[dr] & & \OO_{\PP^n}(1)^m \ar[r] & \mathcal{E}xt^1(\Ss_m,\OO_{\PP^n}) \ar[r] & 0\\
& & & K_1(1) \ar[ur] \ar[dr]\\
& & 0\ar[ur] & & 0
}
$$
from where we deduce that
\begin{equation} \label{e2} H^0(\Ss_m^{\lor}(1)) \cong Ker H^0(A(1)). \end{equation}
Moreover, by Serre duality  $ Ker H^0(A(1)) \cong \coker H^n(A^t(-n-2)) $. Using the exact sequence
$$
0 \rightarrow \OO_{\PP^n}^m \rightarrow \OO_{\PP^n}^{nm}(1) \rightarrow \Ss_m \rightarrow 0
$$
together with the fact that $\Ss_m$ is an $m$-tail reflexive sheaf we deduce that $$\dim \coker H^n(A^t(-n-2))=0.$$ Therefore it follows from $(\ref{e2})$ that  $h^0(\Ss^\lor_m(1))=0$ and hence $h^0({\Ss_m^\lor(1)}_{|H})=0$.\\

Therefore, ${A_m}_{|H}$ has no column of zeros and all its entries are linear forms in  $x_0,\ldots,x_{n-1}$.  By performing a change of coordinates, we can assume that ${A_m}_{|H}$ is divided in $(1\times n)$-dimensional blocks which we denote by $A^i_m$ and whose entries are  $n$ independent linear forms $\ell_1^i,\ldots,\ell_n^i \in \KK[x_0,\ldots,x_{n-1}]$. Hence, we can describe the matrix as
$$
{A_m}_{|H} =
\left[
\begin{array}{cccccccccccc}
A^1_m & 0 & 0 & \cdots & 0\\
0 & A^2_m & 0 & \ldots  & 0\\
\vdots & & \ddots & & \vdots\\
0 & 0& \cdots & 0 & A^m_m
\end{array}
\right]
$$
and this implies that ${\Ss_m}_{|H} \simeq T^m_{\PP^{n-1}}$.
\end{proof}

\begin{Remark}
 By Proposition \ref{tangent},  $H^{n-2}({\Ss_m}_{|H}(\alpha))=0$ for any $\alpha \leq -n-1$. Hence, using the exact sequence
  \[ 0 \rightarrow \Ss_m(-1) \rightarrow \Ss_m \rightarrow {\Ss_m}_{|H} \rightarrow 0 \]
  we get that  the map $H^{n-1}(\Ss_m(\alpha-1)) \stackrel{l}{\rightarrow} H^{n-1}(\Ss_m(\alpha))$, defined by the multiplication by a linear form $l$, is an isomorphism for every $\alpha \leq -n-1$.
\end{Remark}

Our next  goal  is to describe  minimal $m$-tail reflexive sheaves on $\PP^n$. We start with  technical results, useful to better understand them.

\begin{Lemma}\label{lem3form}
Let $A_m$ be an $(m \times nm)$ matrix with linear entries defining $\Ss_m$, i.e., $\Ss_m$ is given by
$$
0 \rightarrow \OO_{\PP^n}^m  \stackrel{A_m^t}{\rightarrow} \OO_{\PP^n}(1)^{nm} \rightarrow \Ss_m \rightarrow 0.
$$
Then, after a change of basis,  $\Ss_m$ can be defined by an $(m \times nm)$ matrix with at least one row with exactly $n$ linearly independent linear forms.
\end{Lemma}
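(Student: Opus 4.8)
The plan is to locate the required row geometrically rather than by blind manipulation of $A_m$: it should arise from the kernel of the evaluation of $A_m^t$ at a singular point of $\Ss_m$. Since $\Ss_m$ is a proper reflexive sheaf (if it were locally free, Serre duality would give $h^1(\Ss_m^\lor(s))=m$ for all $s\gg 0$, against Serre vanishing), $\sing(\Ss_m)\neq\emptyset$; fix $p\in\sing(\Ss_m)$. By Lemma \ref{singpoint}, $\sing(\Ss_m)=\Supp\,\mathcal{E}xt^1(\Ss_m,\OO_{\PP^n})$, and dualizing (\ref{resmin}) identifies $\mathcal{E}xt^1(\Ss_m,\OO_{\PP^n})$ with $\coker(\OO_{\PP^n}(-1)^{nm}\stackrel{A_m}{\rightarrow}\OO_{\PP^n}^m)$; hence $\mathcal{E}xt^1(\Ss_m,\OO_{\PP^n})_p\neq 0$ forces $\rk A_m(p)\leq m-1$, so there is $0\neq v\in\KK^m$ with $A_m^t(p)\,v=0$, where $A_m(p)$ is the scalar matrix obtained by evaluating the linear entries of $A_m$ at $p$. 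Put $w:=A_m^t v$, a nonzero vector of $nm$ linear forms (nonzero since $A_m^t$, being injective as a sheaf map, is injective on global sections), and let $V\subseteq\KK[x_0,\dots,x_n]_1$ be the span of its entries. Everything reduces to proving $\dim V=n$: granting this, I would change the basis of $\OO_{\PP^n}^m$ so that $v$ is the first basis vector — making $w$ the first row of $A_m$ — select $n$ entries of $w$ forming a basis of $V$, move them to the front by a column permutation, and clear the remaining $nm-n$ entries of that row by elementary column operations (each lies in $V$), obtaining a presentation of $\Ss_m$, still minimal as all entries stay linear, whose first row has exactly $n$ linearly independent linear forms. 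The inequality $\dim V\leq n$ is immediate: $w(p)=A_m^t(p)\,v=0$, so every entry of $w$ lies in $\{f\in\KK[x_0,\dots,x_n]_1: f(p)=0\}$, which has dimension $n$.

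The reverse inequality $\dim V\geq n$ is where the global geometry enters, and this is the step I expect to be the main obstacle: the dimension of the span of the entries of $w$ is not visible in $A_m$ itself, and only Proposition \ref{tangent} rules out that $w$ spans fewer than $n$ directions. Concretely, I would restrict to a general hyperplane $H\cong\PP^{n-1}$ disjoint from $\sing(\Ss_m)$. Restricting (\ref{resmin}) to $H$ stays exact because $\Ss_m$ is torsion-free, so that $\mathcal{T}or_1(\Ss_m,\OO_H)=0$; hence ${A_m^t}_{|H}$ is a minimal presentation of ${\Ss_m}_{|H}$, and ${\Ss_m}_{|H}\simeq T^m_{\PP^{n-1}}$ by Proposition \ref{tangent}. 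By uniqueness of minimal free resolutions, ${A_m^t}_{|H}=h\,B\,g$ for suitable invertible constant matrices $h,g$, where $B$ is the standard block-diagonal presentation of $T^m_{\PP^{n-1}}$, a direct sum of $m$ Euler maps, whose $m$ diagonal blocks each equal the column of the $n$ homogeneous coordinates of $\PP^{n-1}$. Then $w_{|H}={A_m^t}_{|H}\,v=h\,B\,(gv)$; writing $gv=(c_1,\dots,c_m)\neq 0$, the entries of $B(gv)$ are those coordinates scaled by the $c_j$, and since some $c_j\neq 0$ they span all of $H^0(\OO_H(1))$, a space of dimension $n$; left-multiplication by the invertible constant matrix $h$ does not change the span of the entries of a vector. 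So the entries of $w_{|H}$ span $H^0(\OO_H(1))$, and since that span is the image of $V$ under the restriction $H^0(\OO_{\PP^n}(1))\to H^0(\OO_H(1))$, we conclude $\dim V\geq n$.

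Combining the two estimates gives $\dim V=n$, which completes the argument along the lines described in the first paragraph. Apart from the lower bound, the remaining points are routine bookkeeping that I would still check: that restricting (\ref{resmin}) to a general $H$ preserves both exactness and minimality, that the various moves used (change of basis on $\OO_{\PP^n}^m$, column permutations, elementary column operations) are changes of coordinates in the sense of Section \ref{sec-prel}, and that the passage from ``$V$ has dimension $n$'' to ``some row has exactly $n$ nonzero, linearly independent entries'' is achieved precisely by the column-clearing step above.
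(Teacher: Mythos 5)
Your proof is correct, and its skeleton matches the paper's: pick a singular point $p$, pass to the row of linear forms given by the linear dependence of the rows of $A_m(p)$ (your kernel vector $v$ is exactly the paper's row operation $r_1-\alpha_2r_2-\cdots-\alpha_mr_m$), bound the span $V$ of its entries above by $n$ because all entries vanish at $p$, and then establish $\dim V\ge n$. Where you genuinely diverge is in that last, crucial inequality. The paper disposes of it in one line using Lemma \ref{singpoint}: if the entries of the row spanned a space of dimension $<n$, their common zero locus would be a linear subspace of dimension $\ge 1$, the evaluated matrix would have rank $<m$ at every point of it, and $\Ss_m$ would be singular along (at least) a line, contradicting the $0$-dimensionality of $\sing(\Ss_m)$. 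You instead restrict to a hyperplane $H$ avoiding $\sing(\Ss_m)$, invoke Proposition \ref{tangent} to identify ${\Ss_m}_{|H}$ with $T_{\PP^{n-1}}^m$, and use uniqueness of minimal free resolutions to force ${A_m^t}_{|H}$ into the block-diagonal Euler form, from which the entries of $w_{|H}$ visibly span all of $H^0(\OO_H(1))$. This works --- the identification of the cokernel module of ${A_m^t}_{|H}$ with $H^0_*(T_{\PP^{n-1}}^m)$ goes through because $H^1_*(\OO_{\PP^{n-1}}^m)=0$, and minimality is preserved since all entries stay linear --- but it is considerably heavier machinery for a fact that follows directly from the finiteness of the singular locus. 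On the other hand, your explicit final step (choosing $n$ entries forming a basis of $V$ and clearing the remaining $nm-n$ entries of that row by column operations, i.e.\ by a constant automorphism of $\OO_{\PP^n}(1)^{nm}$) spells out a reduction that the paper leaves implicit but uses in the shape of the matrix in Proposition \ref{extensions1}, so it is a worthwhile addition.
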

\begin{proof}
We denote by $A_m(P)$ the evaluation of the $m \times nm$ matrix $A_m$ at  a point  $P$ of $\PP^n$. Since $\Ss_m$ is not locally free,  there exists at least one point $P \in \PP^n$ such that $\rk A_m(P)<m$.
So, after a change of basis, we can assume that the first row of $A_m(P)$ is a linear combination of the other rows. Hence, if $l_1,\ldots,l_m$ are the rows of $A_m(P)$, there exist $\alpha_2,\ldots,\alpha_m \in \KK$ such that $l_1 = \alpha_2 l_2 + \ldots +\alpha_{m} l_{m}$. On the other hand, notice that if we denote by $r_1,\ldots,r_m$ the rows of $A_m$, the matrix
$$
A'_m =
\left(
\begin{array}{c}
r_1 - \alpha_2 r_2 - \ldots - \alpha_{m}r_{m} \\
\hline \vdots \\
\hline r_{m-1} \\
\hline r_m
\end{array}
\right)
$$
also defines $\Ss_m$. Moreover, the first row of $A'_m(P)$ is zero by construction, which means that all the linear forms of the first row of $A'_m$ vanish at the point $P$. Therefore, in the first row of $A'_m$   we cannot have more than $n$ linearly independent linear forms. Finally, if  in one row of the matrix defining the minimal $m$-tail reflexive sheaf $\Ss_m$ we have a number of independent linear forms strictly less than $n$, then  $\Ss_m$ would fail to be locally free on at least a line, which according to Lemma \ref{singpoint} cannot occur. Therefore, $\Ss_m$ can be defined by an $(m \times nm)$ matrix of linear entries with at least one row with exactly $n$ linearly independent linear forms.
\end{proof}

\vspace{3mm}

The following result will tell us that every minimal $m$-tail  reflexive sheaf can be obtained as a chain of extensions. We will say that $\FF$ is an {\em $m$-chain of extensions of $\Ss_1$} if it can be obtained by iterated extensions by $\Ss_1$, which means that its defining matrix is given by
$$
\left[
\begin{array}{cccccccccccc}
L_1 & 0 & 0 & \cdots & 0\\
* & L_2 & 0 & \ldots  & 0\\
\vdots & & \ddots & & \vdots\\
* & *& \cdots & * & L_m
\end{array}
\right]
$$
where the $L_i$'s are $(1\times n)$ matrices, each one defined by $n$ linearly independent linear forms.

\begin{Proposition} \label{extensions1}
Let $\Ss_m$ be a minimal $m$-tail  reflexive sheaf. Then $\Ss_m$ is an $m$-chain of extensions of $\Ss_1$'s.
\end{Proposition}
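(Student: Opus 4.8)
The plan is to argue by induction on $m$, peeling off a single copy of $\Ss_1$ at each step. For $m=1$ there is nothing to prove, so assume the statement for $m-1$ and fix a minimal resolution $0\to\OO_{\PP^n}^m\xrightarrow{A_m^t}\OO_{\PP^n}(1)^{nm}\to\Ss_m\to0$ (recall $n\ge3$, since $\Ss_m$ is not locally free). By Lemma \ref{lem3form}, after a change of basis one row of $A_m$ — say the last — consists of linear forms spanning the $n$‑dimensional space $W_P$ of forms through a point $P\in\sing(\Ss_m)$; scalar column operations then clear all but $n$ of its entries, and after reordering the block rows and columns we may assume
\[A_m\;\sim\;\begin{pmatrix} L_m & 0\\ C & B\end{pmatrix},\]
where $L_m$ is a $(1\times n)$ block of $n$ linearly independent forms (a basis of $W_P$), $B$ is $(m-1)\times n(m-1)$ and $C$ is $(m-1)\times n$. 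The evident termwise–split inclusion of two-term complexes $[\OO\xrightarrow{L_m^t}\OO(1)^n]\hookrightarrow[\OO^m\xrightarrow{A_m^t}\OO(1)^{nm}]$ has quotient $[\OO^{m-1}\xrightarrow{B^t}\OO(1)^{n(m-1)}]$; using that $\Ss_m$ has no line-bundle direct summand (Remark \ref{remarkiso}(i)) one checks $B^t$ is injective, so passing to cohomology of complexes yields a short exact sequence
\[0\to\Ss_1\to\Ss_m\to\mathcal Q\to0,\qquad \Ss_1:=\coker L_m^t,\quad \mathcal Q:=\coker B^t,\]
in which $\Ss_1$ is, by Remark \ref{remarkiso}(ii), a $1$-tail reflexive sheaf and $0\to\OO^{m-1}\xrightarrow{B^t}\OO(1)^{n(m-1)}\to\mathcal Q\to0$ is a resolution, so $\hd(\mathcal Q)\le1$.

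The core of the argument is to show that $\mathcal Q$ is again a minimal tail sheaf, of tail $m-1$. First the cohomology: from the two resolutions, $H^{n-1}(\Ss_1(t))=\ker\!\big(H^n(\OO(t))\to H^n(\OO(t+1)^n)\big)$ and similarly for $\Ss_m$, and the map $H^{n-1}(\Ss_1(t))\to H^{n-1}(\Ss_m(t))$ induced by $\Ss_1\hookrightarrow\Ss_m$ is the restriction to these kernels of the split inclusion $H^n(\OO(t))\hookrightarrow H^n(\OO(t)^m)$ of the first summand, hence injective; the same holds one degree up for $H^n$. Feeding this into the long exact cohomology sequence of $0\to\Ss_1\to\Ss_m\to\mathcal Q\to0$ and using $H^i_*(\Ss_1)=H^i_*(\Ss_m)=0$ for $1\le i\le n-2$ kills all the connecting maps, giving $H^i_*(\mathcal Q)=0$ for $1\le i\le n-2$ and a short exact sequence $0\to H^{n-1}(\Ss_1(t))\to H^{n-1}(\Ss_m(t))\to H^{n-1}(\mathcal Q(t))\to0$, so $h^{n-1}(\mathcal Q(t))=m-1$ for $t\le -n-1$ and $0$ otherwise. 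Thus $\mathcal Q$ satisfies conditions (i)--(ii) of Definition \ref{m-tail}, normalized, with $m$ replaced by $m-1$. Now the computation in the proof of Lemma \ref{singpoint} uses only that the sheaf has $\hd\le1$, Serre duality, and exactly these vanishings; re-running it for $\mathcal Q$ shows $h^0(\mathcal{E}xt^1(\mathcal Q,\OO_{\PP^n})\otimes\OO_{\PP^n}(-\alpha))$ is eventually constant equal to $m-1$, so $\mathcal{E}xt^1(\mathcal Q,\OO_{\PP^n})$ is $0$-dimensional. Together with $\hd(\mathcal Q)\le1$ this forces $\operatorname{depth}\mathcal Q_x\ge\min(2,\dim\OO_{\PP^n,x})$ at every point, i.e. $\mathcal Q$ is torsion-free and reflexive; and $\rk\mathcal Q=\rk\Ss_m-\rk\Ss_1=(n-1)(m-1)$, so by Lemma \ref{lemmarank} $\mathcal Q$ is a minimal $(m-1)$-tail reflexive sheaf.

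By the inductive hypothesis $\mathcal Q$ is an $(m-1)$-chain of extensions of $\Ss_1$'s: after a change of basis, $B$ is lower block–triangular with $(1\times n)$ diagonal blocks, each given by $n$ linearly independent forms. The row operations (on the last $m-1$ rows), column operations (on the last $n(m-1)$ columns) and change of coordinates on $\PP^n$ achieving this extend to operations on $A_m\sim\left(\begin{smallmatrix}L_m & 0\\ C & B\end{smallmatrix}\right)$: they leave the zero block of the first block row untouched and keep the entries of $L_m$ linearly independent. Hence $A_m$ itself becomes lower block–triangular with diagonal blocks $L_m,L_{m-1},\dots,L_1$, i.e. $\Ss_m$ is an $m$-chain of extensions of $\Ss_1$'s.

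I expect the middle step to be the real obstacle. The extension $0\to\Ss_1\to\Ss_m\to\mathcal Q\to0$ does not by itself make $\mathcal Q$ reflexive — quotients of reflexive sheaves need not be reflexive — so reflexivity must be bootstrapped: establish the cohomology of $\mathcal Q$ purely homologically (which is why the split-injection observation on $H^n$ of line bundles is needed, to make all connecting maps vanish), and only then feed this back through the local-to-global $\Ext$ spectral sequence, as in Lemma \ref{singpoint}, to conclude that $\mathcal Q$ fails to be locally free only on a $0$-dimensional scheme. The injectivity of $B^t$ (equivalently $\Ss_1\hookrightarrow\Ss_m$) is the companion point that must be pinned down carefully, since it is precisely what keeps $\hd(\mathcal Q)$ equal to $1$ and hence keeps $\mathcal Q$ in the class to which the inductive hypothesis applies.
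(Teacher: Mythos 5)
Your overall strategy is the same as the paper's: use Lemma \ref{lem3form} to put $A_m$ in the block form $\left(\begin{smallmatrix} L_m & 0\\ C & B\end{smallmatrix}\right)$, realize $\Ss_m$ as an extension of a quotient $\mathcal Q$ by $\Ss_1$, show $\mathcal Q$ is a minimal $(m-1)$-tail reflexive sheaf, and induct. The place where you diverge from the paper is the computation of the tail of $\mathcal Q$, and that is where your argument has a genuine gap. Your injectivity claim in degree $n-1$ is correct: $H^{n-1}(\Ss_1(t))\to H^{n-1}(\Ss_m(t))$ is the restriction to kernels of the split inclusion $H^n(\OO(t))\hookrightarrow H^n(\OO(t))^m$, and restrictions of injections are injective. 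But ``the same holds one degree up for $H^n$'' is false: there the map is induced on \emph{cokernels} of $H^n(L_m^t)$ and $H^n(A_m^t)$, and a map induced on cokernels by split inclusions need not be injective --- an element $(v,0)$ can lie in the image of $H^n(A_m^t)$ via some $(w_1,w')$ with $H^n(B^t)w'=0$ but $w'\neq 0$, without $v$ lying in the image of $H^n(L_m^t)$. This is not a hypothetical failure. For the $3\times 9$ matrix displayed just before Proposition \ref{extdifpoint} (a $3$-chain of extensions with exactly your block structure on $\PP^3$), one computes $h^{2}(\FF(-6))=2$, $h^{2}(\Ss_1(-6))=1$ and $h^{2}(\mathcal Q(-6))=2$, so the connecting map $H^{2}(\mathcal Q(-6))\to H^{3}(\Ss_1(-6))$ is nonzero and $H^{3}(\Ss_1(-6))\to H^{3}(\FF(-6))$ is \emph{not} injective. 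Since your justification uses only the shape of the resolutions and never the hypothesis that $\Ss_m$ is a tail sheaf, it would yield $h^{2}(\mathcal Q(-6))=1$ in that example as well, which is wrong. Some genuine use of the tail hypothesis is unavoidable for the upper bound $h^{n-1}(\mathcal Q(t))\le m-1$; the paper supplies it by restricting to a hyperplane $H$ avoiding $\sing(\Ss_m)$, invoking Proposition \ref{tangent} to get $\Ss_m|_H\cong T_{\PP^{n-1}}^m$, using simplicity of $T_{\PP^{n-1}}$ to keep the restricted sequence exact so that $\mathcal Q|_H\cong T_{\PP^{n-1}}^{m-1}$, and then bounding the tail via $0\to\mathcal Q(-1)\to\mathcal Q\to T_{\PP^{n-1}}^{m-1}\to 0$ together with $H^{n-2}(T_{\PP^{n-1}}(\alpha))=0$ for $\alpha\neq -n$.

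A secondary weak point is your justification of the injectivity of $B^t$ (equivalently of $\Ss_1\to\Ss_m$): the absence of line-bundle direct summands rules out a zero block, but a nonzero kernel of $B^t$ corresponds to a polynomial syzygy among the rows of $B$, which is not excluded by that remark for $m>2$. Here too the clean argument is the hyperplane restriction: $T_{\PP^{n-1}}\to T_{\PP^{n-1}}^m$ is either split injective or zero by simplicity, and the zero case is excluded because $\Hom(\OO_{\PP^n}(1),\OO_{\PP^n})=0$ forbids the first $n$ generators of $\OO_{\PP^n}(1)^{nm}$ from dying in $\Ss_m$. Your bookkeeping on reflexivity and rank of $\mathcal Q$, and the final assembly of the lower block-triangular matrix, are fine, but as written the proof does not close without repairing the cohomological step above.
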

\begin{proof}
From Lemma \ref{lem3form}, we can assume that $\Ss_m$ is defined by a matrix $A_m$ of the form
$$
\left(
\begin{array}{ccc|c}
x_0 &	\cdots & x_{n-1} & 0 \cdots 0\\
\hline
& C & & B_{m-1}
\end{array}
\right).
$$
This means that $\Ss_m$ can be realized as an extension
\begin{equation}\label{extF}
0\rightarrow \Ss_1 \rightarrow \Ss_m \rightarrow \FF \rightarrow 0
\end{equation}
of $\FF$ by $\Ss_1$, obtaining the following commutative diagram
$$
\xymatrix{
 & 0 \ar[d] & 0  \ar[d]& 0 \ar[d]& \\
 0 \ar[r] & \OO_{\PP^n} \ar[r]^{[x_0 \: \cdots \: x_{n-1}]^t} \ar[d] &  \OO_{\PP^n}^n  \ar[r] \ar[d] & \Ss_1 \ar[r] \ar[d] & 0 \\
 0 \ar[r] & \OO_{\PP^n}^m \ar[r]^{A_m^t} \ar[d] & \OO_{\PP^n}(1)^{nm}  \ar[r] \ar[d] & \Ss_m \ar[r] \ar[d] & 0 \\
 0 \ar[r] & \OO_{\PP^n}^{m-1} \ar[r]^{B_{m-1}^t} \ar[d] & \OO_{\PP^n}(1)^{nm} \ar[r] \ar[d] & \mathcal{F} \ar[r] \ar[d] & 0\\
 & 0 & 0 & 0
}
$$
In fact,  the diagram is  constructed considering the first two rows and completing it using the Snake Lemma.\\

Restricting the sequence (\ref{extF}) to a hyperplane $H \cong \PP^{n-1}$ not passing through $\sing(\Ss_m) \supset \sing(\Ss_1)$, by Proposition \ref{tangent} we obtain
$$
0 \rightarrow  T_{\PP^{n-1}} \rightarrow T_{\PP^{n-1}}^m \rightarrow \FF_{|{\PP^{n-1}}} \rightarrow 0.
$$
Indeed, the restriction of the short exact sequence remains exact because, since $T_{\PP^{n-1}}$ is a simple vector bundle, i.e. $\Hom(T_{\PP^{n-1}}, T_{\PP^{n-1}}) \cong \KK$, the map $T_{\PP^{n-1}} \rightarrow T_{\PP^{n-1}}^m$ is either injective or zero. The zero case leads to contradiction, or else $\FF$ would restrict as the direct sum of $m$ copies of the tangent bundle.\\
Hence, we conclude that  $\FF_{|{\PP^{n-1}}} \cong T_{\PP^{n-1}}^{m-1}$. This also tells us that $\FF$ must not be locally free at a finite set of points, else we would not have a vector bundle as its restriction.\\
From the last row of the above diagram we have  $h^{n-1}(\FF(-n-1)) = m-1$, $h^{n-1}(\FF(t)) = 0$ for $t>-n-1$ and $H^i_*(\FF)=0$, for $i=1,\ldots,n-2$. Moreover, from
$$
0\rightarrow \FF(-1) \rightarrow \FF \rightarrow  \FF_{| \PP^{n-1}}\cong T_{\PP^{n-1}}^{m-1} \rightarrow 0
$$
together with the fact that $H^{n-2}(T_{\PP^{n-1}}(\alpha))=0$ for $\alpha \neq -n$, we have that $h^{n-1}(\FF(t))\leq m-1$ for $t \leq -n-1$.
Finally, since $\Ss_m$ and $\Ss_1$ are tail reflexive sheaves, considering (\ref{extF}) we obtain  $h^{n-1}(\FF(t))\geq m-1$ for $t \leq -n-1$. \\
Putting altogether we have proved that $\FF$ is a minimal $(m-1)$-tail reflexive sheaf on $\PP^n$.

Iterating the process, the proposition is proven.
\end{proof}

\vspace{3mm}

Now we will prove that any minimal $m$-tail reflexive sheaf $S_m$ with the singular locus $\sing(\Ss_m)$
containing at least two different points is a direct sum of minimal tail reflexive sheaves with only one point in its singular locus. This will be a consequence of the following result
\begin{Proposition}
\label{extdifpoint}
Let $\Ss_i$ be a minimal $i$-tail reflexive sheaf and $\Ss_j$ be a minimal $j$-tail reflexive sheaf such that $\sing(\Ss_i) \cap \sing(\Ss_j) = \emptyset$. Assume that $\FF$ is a minimal $(i+j)$-tail sheaf given by an extension
$$ 0\rightarrow \Ss_i \rightarrow \FF \rightarrow \Ss_{j} \rightarrow 0.$$
Then, $\FF \cong \Ss_j \oplus \Ss_i$.
\end{Proposition}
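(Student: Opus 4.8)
The plan is to prove that the extension class $e\in\Ext^{1}(\Ss_{j},\Ss_{i})$ of the given sequence vanishes; since $\Ss_{i},\Ss_{j}$ are reflexive this forces $\FF\cong\Ss_{j}\oplus\Ss_{i}$. Two preliminary observations. Applying $\mathcal{H}om(-,\OO_{\PP^{n}})$ to the sequence, and using that the dual of a torsion--free sheaf is torsion--free — so the connecting map $\Ss_{i}^{\lor}\to\mathcal{E}xt^{1}(\Ss_{j},\OO_{\PP^{n}})$ (torsion--free into torsion) is zero — one gets
\[
0\to\Ss_{j}^{\lor}\to\FF^{\lor}\to\Ss_{i}^{\lor}\to 0,\qquad
0\to\mathcal{E}xt^{1}(\Ss_{j},\OO_{\PP^{n}})\to\mathcal{E}xt^{1}(\FF,\OO_{\PP^{n}})\to\mathcal{E}xt^{1}(\Ss_{i},\OO_{\PP^{n}})\to 0,
\]
and the second sequence splits because its end terms have disjoint $0$--dimensional supports. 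Moreover, because $\sing(\Ss_{i})\cap\sing(\Ss_{j})=\emptyset$, at every point of $\PP^{n}$ one of $\Ss_{i},\Ss_{j}$ is locally free; hence $\mathcal{H}om(\Ss_{j},\Ss_{i})=\Ss_{j}^{\lor}\otimes\Ss_{i}$, while $\mathcal{E}xt^{p}(\Ss_{j},\Ss_{i})=0$ for $p\ge 2$ (as $\hd(\Ss_{j})=1$) and $\mathcal{E}xt^{1}(\Ss_{j},\Ss_{i})$ is supported on the finite set $\sing(\Ss_{j})$.

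Hence the local--to--global $\Ext$ spectral sequence collapses to an exact sequence
\[
0\to H^{1}(\Ss_{j}^{\lor}\otimes\Ss_{i})\to\Ext^{1}(\Ss_{j},\Ss_{i})\xrightarrow{\ \rho\ }H^{0}(\mathcal{E}xt^{1}(\Ss_{j},\Ss_{i})),
\]
so it is enough to show (a) $\rho(e)=0$ and (b) the resulting class of $e$ in $H^{1}(\Ss_{j}^{\lor}\otimes\Ss_{i})$ is zero. For (a), $\rho$ records the local type of the extension at the points of $\sing(\Ss_{j})$, so I would check the sequence splits over $\OO_{\PP^{n},p}$ for each $p\in\sing(\Ss_{j})$. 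Near such a $p$ we have $\Ss_{i}\cong\OO_{\PP^{n}}^{\,r}$, $r=\rk\Ss_{i}$, and by the splitting above $\mathcal{E}xt^{1}(\FF,\OO_{\PP^{n}})_{p}\cong\mathcal{E}xt^{1}(\Ss_{j},\OO_{\PP^{n}})_{p}$; the local $\mathcal{E}xt$--sequence of $0\to\OO^{r}\to\FF\to\Ss_{j}\to 0$ then exhibits the surjection $\mathcal{E}xt^{1}(\Ss_{j},\OO)_{p}\twoheadrightarrow\mathcal{E}xt^{1}(\FF,\OO)_{p}$ as a map of finite--length modules of equal length, hence an isomorphism, so the connecting map $\OO_{p}^{\,r}=\mathcal{H}om(\OO^{r},\OO)_{p}\to\mathcal{E}xt^{1}(\Ss_{j},\OO)_{p}$ vanishes, $\mathcal{H}om(\FF,\OO)_{p}\to\mathcal{H}om(\OO^{r},\OO)_{p}$ is onto, and a local retraction $\FF\to\OO^{r}$ exists. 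Thus $\rho(e)=0$ and $e\in H^{1}(\Ss_{j}^{\lor}\otimes\Ss_{i})$.

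For (b) I would restrict to a general hyperplane $H\cong\PP^{n-1}$ missing $\sing(\Ss_{i})\cup\sing(\Ss_{j})$. By Proposition~\ref{tangent}, $\Ss_{i}|_{H}\cong T_{H}^{i}$, $\Ss_{j}|_{H}\cong T_{H}^{j}$, and ($\Ss_{j}$ being locally free along $H$) the sequence restricts to $0\to T_{H}^{i}\to\FF|_{H}\to T_{H}^{j}\to 0$, whose class lies in $\Ext^{1}_{H}(T_{H}^{j},T_{H}^{i})=H^{1}(\mathcal{E}nd\,T_{\PP^{n-1}})^{ij}=0$ by rigidity of the tangent bundle of projective space; so $e|_{H}=0$, and from $0\to(\Ss_{j}^{\lor}\otimes\Ss_{i})(-1)\xrightarrow{x_{n}}\Ss_{j}^{\lor}\otimes\Ss_{i}\to(\Ss_{j}^{\lor}\otimes\Ss_{i})|_{H}\to 0$ with $H^{1}((\Ss_{j}^{\lor}\otimes\Ss_{i})|_{H})=H^{1}(\mathcal{E}nd\,T_{\PP^{n-1}})^{ij}=0$ one gets that multiplication by $x_{n}$ is onto $H^{1}(\Ss_{j}^{\lor}\otimes\Ss_{i})$. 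To conclude $e=0$ I would pass to presentation matrices: by the Horseshoe Lemma and Remark~\ref{remarkiso}(ii), $\FF$ is the cokernel of $\left(\begin{smallmatrix}A_{i}^{t}&\tau\\ 0&A_{j}^{t}\end{smallmatrix}\right)$, and the point is that a change of coordinates kills $\tau$. I would argue by induction on $j$, splitting off a copy of $\Ss_{1}$ from $\Ss_{j}$ via Proposition~\ref{extensions1}; in the base case $j=1$ one may take $A_{j}=L=[x_{0}\ \cdots\ x_{n-1}]$ with $p_{0}=[0:\cdots:0:1]=\sing(\Ss_{1})\notin\sing(\Ss_{i})$, so $\mathcal{E}xt^{1}(\Ss_{1},\OO_{\PP^{n}})=\OO_{p_{0}}$ and $\mathcal{E}xt^{1}(\FF,\OO_{\PP^{n}})_{p_{0}}$ has length $1$; this forces $[\,A_{i}^{t}(p_{0})\mid\tau(p_{0})\,]$ to have rank $i$, i.e.\ $\tau(p_{0})$ lies in the column span of $A_{i}^{t}(p_{0})$, whence $\tau-A_{i}^{t}\lambda$ vanishes at $p_{0}$ for a constant $\lambda$, all its entries lie in $(x_{0},\dots,x_{n-1})$, so $\tau-A_{i}^{t}\lambda=QL^{t}$ for a constant matrix $Q$, and $\tau\mapsto\tau-A_{i}^{t}\lambda-QL^{t}=0$ is the required change of coordinates.

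The main obstacle is step (b): $H^{1}(\Ss_{j}^{\lor}\otimes\Ss_{i})$ is in general nonzero, so the vanishing of $e$ is not formal and genuinely uses the rigid local structure of minimal tail sheaves; in particular, organizing the induction so that the possibly non-reduced singular scheme of $\Ss_{j}$ is peeled off compatibly with Proposition~\ref{extensions1} — rather than all at once — is the delicate bookkeeping one must carry out carefully.
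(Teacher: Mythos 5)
There is a genuine gap here, and also one incorrect justification along the way. The incorrect justification first: you claim the connecting map $\Ss_i^\lor\to\mathcal{E}xt^1(\Ss_j,\OO_{\PP^n})$ vanishes ``because a torsion-free sheaf maps into a torsion sheaf only by zero''. That principle is false (e.g.\ $\OO_{\PP^n}\twoheadrightarrow\OO_p$): a torsion-free sheaf has no torsion subsheaves, but it can have torsion quotients. The vanishing is nevertheless true; it must be extracted from a length count. All the $\mathcal{E}xt^1$ sheaves are supported on points, so additivity of $h^0$ along $0\to\im f\to\mathcal{E}xt^1(\Ss_j,\OO_{\PP^n})\to\mathcal{E}xt^1(\FF,\OO_{\PP^n})\to\mathcal{E}xt^1(\Ss_i,\OO_{\PP^n})\to 0$ together with $h^0=j$, $i+j$, $i$ (the last using that $\FF$ is a \emph{minimal} $(i+j)$-tail sheaf) forces $\im f=0$. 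This is how the paper argues, and your step (a) secretly needs it too, since your identification $\mathcal{E}xt^1(\FF,\OO_{\PP^n})_p\cong\mathcal{E}xt^1(\Ss_j,\OO_{\PP^n})_p$ rests on the splitting you derived from the false principle.

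The genuine gap is that the extension class is never actually shown to vanish. You correctly reduce to killing $e\in H^1(\Ss_j^\lor\otimes\Ss_i)$ after disposing of $\rho(e)$, you correctly concede that $e|_H=0$ on a general hyperplane does not suffice, and the replacement argument --- the block-triangular presentation plus induction on $j$ --- is only carried out for $j=1$. The induction step is precisely where the difficulty lives: peeling off $0\to\Ss_1\to\Ss_j\to\Ss_{j-1}\to 0$ via Proposition~\ref{extensions1} produces intermediate extensions whose minimal-tail property has to be verified before the base case or the inductive hypothesis can be applied, and the paper's own example immediately preceding the proposition shows that an extension of minimal tail sheaves need not be a minimal tail sheaf. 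You flag this yourself as ``delicate bookkeeping,'' but it is the heart of the proof, not bookkeeping. For comparison, the paper's proof avoids the extension class entirely: having split $\mathcal{E}xt^1(\FF,\OO_{\PP^n})\cong\mathcal{E}xt^1(\Ss_i,\OO_{\PP^n})\oplus\mathcal{E}xt^1(\Ss_j,\OO_{\PP^n})$ as sheaves with disjoint $0$-dimensional supports, it fits the dualized minimal presentations into a $3\times 3$ diagram and reads off from the splitting of the bottom row that the presentation matrix of $\FF$ consists of the two diagonal blocks $A_i^t$, $A_j^t$ with zeros elsewhere, whence $\FF\cong\Ss_i\oplus\Ss_j$ by dualizing back --- no induction and no hyperplane restriction. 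Your $j=1$ computation is a correct and concrete instance of that mechanism, but as written the general case is not proved.
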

Before proving the proposition, notice that the hypothesis of $\FF$ being minimal is necessary; indeed not every extension of minimal tail is a minimal tail. Consider for example the sheaf $\FF$ defined on $\PP^3$ as the cokernel of the transposed of the matrix
$$
\left[
\begin{array}{ccccccccccccccccccccccccc}
x & y & z & 0 & 0 & 0 & 0 & 0 & 0\\
t & 0 & 0 & x & y & z &  0 & 0 & 0\\
 0 & 0 & 0 & 0 & t & 0 & x & y & z
\end{array}
\right]
$$
which is an extension of an $\Ss_2$ and $\Ss_1$ (in our notation it is also a 3-chain extension of $\Ss_1$'s).
Computing cohomology (using Macaulay2) we get that $h^2(\FF(-5)) = 3$ and $h^2(\FF(-6)) = 2$, hence $\FF$ is not a 3-tail reflexive sheaf.

\begin{proof}[Proof of Proposition \ref{extdifpoint}] Denote by $A_i$ the matrix defining $\Ss_i$ as a cokernel and by $A_j$ the matrix defining $\Ss_j$ as a cokernel.

Consider the dual of the sequence defining the extension
$$
0 \rightarrow \Ss_j^\lor \rightarrow \FF^\lor  \rightarrow \Ss_i^\lor \stackrel{f}{\rightarrow}
\mathcal{E}xt^1(\Ss_j,\OO_{\PP^n}) \rightarrow \mathcal{E}xt^1(\FF,\OO_{\PP^n})  \rightarrow \mathcal{E}xt^1(\Ss_i,\OO_{\PP^n}) \rightarrow 0.
$$
and recall we have already noticed that $h^0(\mathcal{E}xt^1(\Ss_i,\OO_{\PP^n})) = i$, $h^0(\mathcal{E}xt^1(\Ss_j,\OO_{\PP^n})) = j$ and by assumption $h^0(\mathcal{E}xt^1(\FF,\OO_{\PP^n}))=i+j$. Moreover, being the $\mathcal{E}xt$ sheaves involved supported on a 0-dimensional scheme, all their cohomology, except for their global sections, vanish. This forces the map $f$ to be zero, henceforth to the splitting of the dual sequence into the following short exact sequences
\begin{equation} \label{split}  0 \rightarrow \Ss_j^\lor \rightarrow \FF^\lor  \rightarrow \Ss_i^\lor \rightarrow 0\end{equation}
 and
\begin{equation} \label{split2}  0 \rightarrow \mathcal{E}xt^1(\Ss_j,\OO_{\PP^n}) \rightarrow \mathcal{E}xt^1(\FF,\OO_{\PP^n})  \rightarrow \mathcal{E}xt^1(\Ss_i,\OO_{\PP^n}) \rightarrow 0.  \end{equation}

Since  $\mathcal{E}xt^1(\Ss_j,\OO_{\PP^n})$ and $\mathcal{E}xt^1(\Ss_i,\OO_{\PP^n})$ are  both coherent sheaves supported on disjoint $0$-dimensional schemes, the exact sequence (\ref{split2}) splits. Hence
\[ \mathcal{E}xt^1(\FF,\OO_{\PP^n}) \cong \mathcal{E}xt^1(\Ss_i,\OO_{\PP^n}) \oplus \mathcal{E}xt^1(\Ss_j,\OO_{\PP^n}) .\]
On the other hand we have the following diagram with exact rows and columns:
$$
\xymatrix{
 &  0\ar[d]&  0\ar[d]&  0\ar[d]&   \\
 0 \ar[r] & \Ss_j^\lor \ar[d]  \ar[r] & \FF^\lor \ar[d] \ar[r] & \Ss_i^\lor \ar[d] \ar[r]& 0 \\
 0 \ar[r] & \OO_{\PP^n}(-1)^{jn} \ar[d] \ar[r]  & \OO_{\PP^n}(-1)^{(i+j)n} \ar[d] \ar[r] & \OO_{\PP^n}(-1)^{in} \ar[d] \ar[r] & 0 \\
  0 \ar[r] & \OO_{\PP^n}^{j}  \ar[d] \ar[r] & \OO_{\PP^n}^{i+j} \ar[d] \ar[r] & \OO_{\PP^n}^{i} \ar[d] \ar[r] & 0 \\
  0  \ar[r] & \mathcal{E}xt^1(\Ss_j,\OO_{\PP^n}) \ar[d] \ar[r]& \mathcal{E}xt^1(\FF,\OO_{\PP^n}) \ar[d] \ar[r ]&\mathcal{E}xt^1(\Ss_i,\OO_{\PP^n}) \ar[d] \ar[r] &0 \\
 & 0&  0& 0 &  }
 $$
From the splitting of the last row, the matrix $A$ has the two blocks $A_i^t$ and $A_j^t$ at the diagonal and zeros elsewhere. Finally cutting the second column in short exact sequences and using the fact that $\FF$ is a reflexive sheaf we get the exact sequence
\[  0  \rightarrow  \OO_{\PP^n}^{i+j}   \stackrel{A^t} \rightarrow  \OO_{\PP^n}(1)^{(i+j)n}  \rightarrow \FF \rightarrow  0.   \]
Therefore,  $\FF \cong \Ss_j \oplus \Ss_i$.
\end{proof}

Now we are ready to state our  first structure Theorem on minimal $m$-tail reflexive sheaves.  We will see that any minimal $m$-tail reflexive sheaf with different singular points splits as a direct sum of minimal tail sheaves with a unique singular point. This is a strong structural property of minimal tail reflexive sheaves that,  as we will prove in the subsequent example, does not hold for reflexive sheaves in general.

\begin{Theorem}
\label{maindiferent}
Let $\Ss_m$ be a minimal $m$-tail reflexive sheaf with $s$ different singular points $p_1, \cdots,p_s$. Then,
\[\Ss_m= \oplus_{i=1}^{s} \Ss_{n_i} \]
where $\Ss_{n_i}$ is a minimal $n_i$-tail reflexive sheaf with a unique singular point $p_i$.  Moreover,  $m=n_1+\cdots+n_s$.
\end{Theorem}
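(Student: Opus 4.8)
The plan is to induct on $m$, using Proposition \ref{extensions1} to strip off a single copy of $\Ss_1$ and Proposition \ref{extdifpoint} to separate the parts of $\Ss_m$ sitting over distinct singular points.

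For $m\le 1$ there is nothing to prove: by Lemma \ref{singpoint} the $0$-dimensional scheme $\sing(\Ss_m)$ has length equal to $h^0(\mathcal{E}xt^1(\Ss_m,\OO_{\PP^n}))=m$, so $m=1$ forces $s=1$. Assume $m\ge 2$. The first step is to use Proposition \ref{extensions1} (together with the proof of Lemma \ref{lem3form}, where the distinguished row vanishes at a point of $\sing(\Ss_m)$) to fix an extension
\[ 0\rightarrow \Ss_1 \rightarrow \Ss_m \rightarrow \Ss_{m-1}\rightarrow 0 \]
with $\Ss_1$ singular at one of the given points, say $p_1$, and $\Ss_{m-1}$ a minimal $(m-1)$-tail sheaf. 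Next I would dualize and exploit that every $\mathcal{E}xt^1(-,\OO_{\PP^n})$ occurring is supported on a $0$-dimensional scheme, hence has no higher cohomology and has length equal to its $h^0$: in the six-term sequence
\[ 0\rightarrow \Ss_{m-1}^\lor \rightarrow \Ss_m^\lor \rightarrow \Ss_1^\lor \stackrel{\delta}{\rightarrow} \mathcal{E}xt^1(\Ss_{m-1},\OO_{\PP^n}) \rightarrow \mathcal{E}xt^1(\Ss_m,\OO_{\PP^n}) \rightarrow \mathcal{E}xt^1(\Ss_1,\OO_{\PP^n}) \rightarrow 0 \]
the sheaf $\mathcal{E}xt^1(\Ss_{m-1},\OO_{\PP^n})$ has length $m-1$ and maps onto the kernel of the last arrow, which also has length $m-1$; hence that map is injective and $\delta=0$. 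Writing $\mathcal{E}xt^1(\Ss_m,\OO_{\PP^n})=\bigoplus_{i=1}^s\mathcal{Q}_i$ along its distinct support points, with $\mathcal{Q}_i$ of length $n_i$ at $p_i$ (so $\sum n_i=m$), one then reads off that $\mathcal{E}xt^1(\Ss_{m-1},\OO_{\PP^n})\cong\bigl(\bigoplus_{i\ge 2}\mathcal{Q}_i\bigr)\oplus\mathcal{K}_1$, where $\mathcal{K}_1=\ker(\mathcal{Q}_1\twoheadrightarrow\mathcal{E}xt^1(\Ss_1,\OO_{\PP^n}))$ is supported at $p_1$ of length $n_1-1$. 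Thus $\Ss_{m-1}$ has singular points $p_2,\dots,p_s$ of local lengths $n_2,\dots,n_s$, together with $p_1$ of local length $n_1-1$ when $n_1\ge 2$; applying the induction hypothesis to $\Ss_{m-1}$ (which has strictly smaller ``$m$'') yields
\[ \Ss_{m-1}\cong \Ss_{n_1-1}\oplus\Ss_{n_2}\oplus\cdots\oplus\Ss_{n_s}, \]
with the first summand omitted if $n_1=1$, each summand having the indicated unique singular point.

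To recombine, write the extension class of the displayed sequence as $(e_1,\dots,e_s)\in\Ext^1(\Ss_{m-1},\Ss_1)=\bigoplus_{i}\Ext^1(\Ss_{n_i'},\Ss_1)$, where $n_1'=n_1-1$ and $n_i'=n_i$ for $i\ge 2$. For each $i\ge 2$, I would pull this sequence back along the summand inclusion $\Ss_{n_i}\hookrightarrow\Ss_{m-1}$, obtaining $0\rightarrow\Ss_1\rightarrow E_i\rightarrow\Ss_{n_i}\rightarrow 0$ of class $e_i$ and realizing $E_i$ as the preimage of $\Ss_{n_i}$ in $\Ss_m$; then $\Ss_m/E_i\cong\bigoplus_{j\ne i}\Ss_{n_j'}$ is torsion-free, so $E_i$ is reflexive (a subsheaf of a reflexive sheaf with torsion-free quotient), and $\rk E_i=(n-1)(1+n_i)$. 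The key claim is that $E_i$ is a \emph{minimal} $(1+n_i)$-tail reflexive sheaf: comparing the two short exact sequences for $E_i$ — as an extension of $\Ss_{n_i}$ by $\Ss_1$, and as a subsheaf of $\Ss_m$ with quotient $\bigoplus_{j\ne i}\Ss_{n_j'}$ — and using that all the outer terms are normalized tail sheaves, a sandwich argument on cohomology gives $H^k_*(E_i)=0$ for $1\le k\le n-2$ and $h^{n-1}(E_i(t))=1+n_i$ for $t\le -n-1$, zero otherwise. Granting this, Proposition \ref{extdifpoint} applies — $\sing(\Ss_1)=\{p_1\}$ and $\sing(\Ss_{n_i})=\{p_i\}$ are disjoint — and forces $E_i\cong\Ss_1\oplus\Ss_{n_i}$, i.e.\ $e_i=0$. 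Therefore $(e_1,\dots,e_s)=(e_1,0,\dots,0)$ and
\[ \Ss_m\cong E_1\oplus\Ss_{n_2}\oplus\cdots\oplus\Ss_{n_s}, \]
where $E_1$ is the extension of $\Ss_{n_1-1}$ by $\Ss_1$ of class $e_1$ (just $\Ss_1$ if $n_1=1$). The same sandwich argument, which never used disjointness, shows $E_1$ is a minimal $n_1$-tail reflexive sheaf; since $E_1$ is locally free off $p_1$ while $\mathcal{E}xt^1(E_1,\OO_{\PP^n})$ has length $n_1>0$, its singular locus is exactly $\{p_1\}$, so $E_1$ is the sheaf we call $\Ss_{n_1}$, and $m=n_1+\cdots+n_s$ (the length of $\sing(\Ss_m)$). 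I expect the main obstacle to be precisely this verification that the auxiliary sheaves $E_i$ are honest minimal tail reflexive sheaves: an extension of reflexive sheaves need not be reflexive, and a priori the $(n-1)$-st cohomology tail of $E_i$ could be shorter than $1+n_i$; both are pinned down only by playing the extension structure of $E_i$ off against its realization as a subsheaf of $\Ss_m$ with reflexive quotient.
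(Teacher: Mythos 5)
Your proof is correct and follows essentially the same route as the paper: induct on $m$, peel off a copy of $\Ss_1$ via Proposition \ref{extensions1}, decompose the complementary minimal $(m-1)$-tail sheaf by induction, and invoke Proposition \ref{extdifpoint} to kill the part of the extension sitting over singular points other than $p_1$. The two genuine differences are cosmetic and substantive respectively: you work with extension classes and pullbacks where the paper works with block matrices and row/column operations; and, more importantly, you supply a justification for the one step the paper merely asserts, namely that the intermediate sheaves $E_i$ (the paper's $\GG_i$) are again \emph{minimal tail} reflexive sheaves. Your sandwich argument does close that gap: the upper bound $h^{n-1}(E_i(t))\le 1+n_i$ comes from $0\to\Ss_1\to E_i\to\Ss_{n_i}\to 0$ (using $H^{n-2}_*(\Ss_{n_i})=0$), the lower bound from $0\to E_i\to\Ss_m\to\bigoplus_{j\ne i}\Ss_{n'_j}\to 0$ (using $H^{n-2}_*$ of the quotient and $h^{n-1}(\Ss_m(t))=m$), and reflexivity of $E_i$ follows, as you say, because it is the kernel of a map from a reflexive sheaf onto a torsion-free one. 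The only step you should tighten is the passage from $E_i\cong\Ss_1\oplus\Ss_{n_i}$ to $e_i=0$: an extension can split as a sheaf without its class vanishing. The paper's matrix formulation absorbs this via uniqueness of the minimal free resolution (the presentation matrix of $\GG_i$ is equivalent to the block-diagonal one, so the off-diagonal block $A_i$ can be removed by a change of basis); in your language you should either argue that the specific surjection $E_i\to\Ss_{n_i}$ splits, or fall back on the resolution argument for this last step.
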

\begin{proof}
We will proceed by induction on $m$. If $m=1$ there is nothing to say. Assume $m>1$. It follows from Proposition \ref{extensions1} that $\Ss_m$ sits in an exact sequence of the following type
\begin{equation} \label{e1}  0\rightarrow \Ss_{m-1} \rightarrow \Ss_m \rightarrow \Ss_{1} \rightarrow 0 \end{equation}
where $\Ss_1$ is a minimal 1-tail sheaf singular at a only one point $p \in \sing(\Ss_m)$.

By hypothesis of induction, \[\Ss_{m-1}= \oplus_{i=1}^{w} \Ss_{\tilde{n}_i} \]
where $\Ss_{\tilde{n}_i}$ is a minimal $\tilde{n}_i$-tail reflexive sheaf with a unique singular point $p_i$, $m-1= \tilde{n}_1 + \ldots + \tilde{n}_w$ and  $w$ is either equal to $s$ or $s-1$. We will denote by $F_i$ the matrix defining $\Ss_{\tilde{n}_i}$ and by $X$ the matrix defining $\Ss_1$.

If $p \notin \sing(\Ss_{m-1})$, then $w=s-1$ and by Proposition \ref{extdifpoint} the exact sequence (\ref{e1}) splits and
$$\Ss_m \cong \Ss_{m-1} \oplus \Ss_1 \cong  \oplus_{i=1}^{s-1} \Ss_{n_i} \oplus \Ss_1 $$
and we are done.

Assume that there exist $i_0$ such that $p \in \sing(\Ss_{i_0})$. Without loss of generality we can assume that $i_0=1$. According to (\ref{e1}), the matrix associated to $\Ss_m$ is given by blocks as

$$
\left[
\begin{array}{ccccccccccccccc}
F_1 & 0 & 0 & 0 & \cdots & 0\\
0 & F_2 & 0 & 0 & \cdots & 0\\
0 &0 & F_3 & 0 & \cdots& 0\\
\vdots & \vdots& & \ddots &\ddots & \vdots\\
0 & 0 &  & & F_{w} & 0\\
A_1 & A_2 & A_3 & \cdots & A_{w} & X
\end{array}
\right]
$$

Denote by $\GG_{w}$ the reflexive sheaf defined by the matrix
$$
\left[
\begin{array}{cc}
F_{w} & 0 \\
A_{w} & X
\end{array}
\right]
$$
so that $\GG_{w}$ is given as the extension
\[ 0 \rightarrow \Ss_{n_{w}} \rightarrow \GG_{w} \rightarrow \Ss_1 \rightarrow 0. \]
Since $\Ss_m$ is a minimal tail reflexive sheaf, $\GG_{w}$ is a minimal tail reflexive sheaf. Moreover, by assumption $ p \notin \sing(\Ss_{n_{w}})$. Hence, by Proposition \ref{extdifpoint}, $\GG_{w} \cong \Ss_1 \oplus \Ss_{n_{w}}$ which implies that $A_{w}=0$.

By performing operations in rows and columns and repeating the same argument  we get that $A_2=\cdots=A_{w}=0$ and therefore $\Ss_m$ is given by the matrix

$$
\left[
\begin{array}{ccccccccccccccc}
F_{w} & 0 & 0 & 0 & \cdots & 0\\
0 & F_{s-2} & 0 & 0 & \cdots & 0\\
0 &0 & F_3 & 0 & \cdots& 0\\
\vdots & \vdots& & \ddots &\ddots & \vdots\\
0 & 0 &  & & F_{1} & 0\\
0 & 0 & 0 & \cdots & A_{1} & X
\end{array}
\right]
$$

Finally, consider $\GG_1$ given by the matrix
$$
\left[
\begin{array}{cc}
F_{1} & 0 \\
A_{1} & X
\end{array}
\right].
$$

Since $\Ss_m$ is a minimal tail sheaf, $\GG_1$ is a minimal tail sheaf with support only one point $p=p_1$ and
\[ \Ss_m \cong \oplus_{i=2}^{w} \Ss_{n_i} \oplus \GG_1\]
which proves what we want.
\end{proof}

By means of the following example we will illustrate that the property that we have just seen,  is intrinsic of tail reflexive sheaves.

\begin{Example} Let $\EE$ be a rank 2 stable reflexive sheaf on $\PP^3$  with Chern classes $(-1,4,16)$. $\EE$ admits a locally free resolution of the following type (see \cite{MR}, Theorem 2.10)
$$0\rightarrow \OO_{\PP^3}(-5)  \stackrel{(f,g,\ell)^t}{\rightarrow} \OO_{\PP^3}(-1)^2  \oplus \OO_{\PP^3}(-4)\rightarrow \EE \rightarrow 0.$$
Choosing $f,g$ general forms of degree 4 and $\ell $ a general linear form, we have by construction that $\sing(\EE)=V(f,g,\ell)$ is a set of 16 different points and since $\EE$ is stable it does not split.
\end{Example}

According to Theorem \ref{maindiferent} we can reduce the classification problem of minimal $m$-tail reflexive sheaves to the classification of minimal $m$-tail reflexive sheaves that have only one singular point.

To this end, the following is the key result

\begin{Proposition} Fix a point $p \in \PP^n$. Let $\FF$ be a rank $(n-1)m$ reflexive sheaf on $\PP^n$ with $\sing(\FF)=\{ p \}$ and given by an extension
 \begin{equation}\label{extess}
 0 \rightarrow \Ss_1^l \rightarrow \FF \rightarrow \Ss_{m-l} \rightarrow 0.
 \end{equation}
So,  $\FF$ is the cokernel of a matrix $A$ with
 \begin{equation}\label{matess}
A^t= \left[
\begin{array}{ccccccccccccccc}
X & 0 & 0 & 0 & \cdots & 0\\
0 & X & 0 & 0 & \cdots & 0\\
0 &0 & X & 0 & \cdots& 0\\
\vdots & \vdots& & \ddots &\ddots & \vdots\\
0 & 0 &  & & X & 0\\
A_1 & A_2 & A_3 & \cdots & A_{l} & B
\end{array}
\right]
\end{equation}
and $X=[\ell_1, \cdots,\ell_n]$ is given by the $n$-linearly independent forms $\ell_i$ defining $p$. Define $\GG_i$ as the cokernel of the transpose of the matrix
$$
C_i:= \left[
\begin{array}{cc}
X & 0 \\
A_{i} & B
\end{array}
\right], 1 \leq i \leq l
$$
and define $\HH_i$ as the cokernel of the transpose of the matrix obtained by deleting the $i$-th row and $i$-th block of columns of $A^t$. Then:
\begin{itemize}
\item[(a)] $\FF$ is a minimal $m$-tail reflexive sheaf if and only if $\HH_i$ is a minimal $(m-1)$-tail reflexive sheaf for $1 \leq i \leq l$.
\item[(b)] $\FF$ is a minimal $m$-tail reflexive sheaf if and only if $\GG_i$ is a minimal $(m-l+1)$-tail reflexive sheaf for $1 \leq i \leq l$.
\end{itemize}
\end{Proposition}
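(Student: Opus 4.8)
The plan is to reduce both statements to a single numerical invariant --- the length $\ell(\sing(-))$ of the zero-dimensional scheme on which the sheaf fails to be locally free --- and to propagate information about it along the short exact sequences built into the block form of $A$. The first ingredient is a pair of such sequences. In the resolution $0\to\OO_{\PP^n}^m\xrightarrow{A^t}\OO_{\PP^n}(1)^{nm}\to\FF\to0$, the $i$-th diagonal block $X$ singles out the subcomplex $[\OO_{\PP^n}\xrightarrow{X^t}\OO_{\PP^n}(1)^n]$, whose quotient complex is exactly the presentation of $\HH_i$; taking cohomology (all three differentials are injective as sheaf maps, the third because the deleted matrix still has generic rank $m-1$) gives $0\to\Ss_1\to\FF\to\HH_i\to0$, and singling out the $l-1$ diagonal blocks other than the $i$-th gives in the same way $0\to\Ss_1^{\,l-1}\to\FF\to\GG_i\to0$. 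Each of $\HH_i$ and $\GG_i$ is again reflexive, of the expected rank, with singular locus inside $\{p\}$, and is itself an extension of $\Ss_{m-l}$ by copies of $\Ss_1$ presented by a matrix of the same shape as $A$; so the Proposition applies recursively to them with $m-1$, resp.\ $m-l+1$, in place of $m$.

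The second ingredient is a criterion for minimality: a reflexive sheaf $\mathcal G$ of rank $(n-1)r$ which is an extension of $\Ss_{m-l}$ by copies of $\Ss_1$ with $\sing(\mathcal G)\subseteq\{p\}$ is a minimal $r$-tail sheaf if and only if $\ell(\sing\mathcal G)=r$, and one always has $\ell(\sing\mathcal G)\le r$. Indeed, from the resolution one gets $H^i_*(\mathcal G)=0$ for $1\le i\le n-2$ and $h^{n-1}(\mathcal G(-n-1))=r$ with $h^{n-1}(\mathcal G(t))=0$ for $t>-n-1$ automatically; restricting the extension to a general hyperplane $H$ and using that $T_{\PP^{n-1}}$ is simple and infinitesimally rigid --- so any extension of a sum of copies of $T_{\PP^{n-1}}$ by another such sum splits --- yields $\mathcal G_{|H}\cong T^{\,r}_{\PP^{n-1}}$, as in Proposition \ref{tangent}, hence $H^{n-2}(\mathcal G_{|H}(t))=0$ for $t\le-n-1$; feeding this into $0\to\mathcal G(t-1)\to\mathcal G(t)\to\mathcal G_{|H}(t)\to0$ shows that multiplication by a linear form $H^{n-1}(\mathcal G(t-1))\hookrightarrow H^{n-1}(\mathcal G(t))$ is injective for $t\le-n-1$, so $h^{n-1}(\mathcal G(t))$ decreases as $t$ decreases from $-n-1$ to its stable value $\ell(\sing\mathcal G)$ (the stability for $t\ll0$ being the local-to-global $\Ext$ computation of Lemma \ref{singpoint}); constancy on the tail is then exactly $\ell(\sing\mathcal G)=r$. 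Applying this to $\FF$, $\HH_i$ and $\GG_i$ converts (a) and (b) into statements about lengths of singular schemes.

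Third, the bookkeeping. Dualizing $0\to\Ss_1\to\FF\to\HH_i\to0$ and using $\mathcal{E}xt^1(\Ss_1,\OO_{\PP^n})\cong\OO_p$ (length $1$) and $\mathcal{E}xt^{\ge2}(\HH_i,\OO_{\PP^n})=0$ --- so that $\mathcal{E}xt^1(\FF,\OO_{\PP^n})\twoheadrightarrow\mathcal{E}xt^1(\Ss_1,\OO_{\PP^n})$ --- one obtains
\[
\ell(\sing\FF)=\ell(\sing\HH_i)+1-\kappa_i,\qquad \kappa_i:=\ell\big(\ker\!\big(\mathcal{E}xt^1(\HH_i,\OO_{\PP^n})\to\mathcal{E}xt^1(\FF,\OO_{\PP^n})\big)\big)\ge0,
\]
and likewise $\ell(\sing\FF)=\ell(\sing\GG_i)+(l-1)-\kappa_i'$ with $\kappa_i'\ge0$. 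This already gives the ``only if'' directions: if $\FF$ is minimal $m$-tail then $\ell(\sing\FF)=m$, and since $\ell(\sing\HH_i)\le m-1$ (resp.\ $\ell(\sing\GG_i)\le m-l+1$) by the criterion, the displayed identities force $\kappa_i=0$ and $\ell(\sing\HH_i)=m-1$ (resp.\ $\kappa_i'=0$ and $\ell(\sing\GG_i)=m-l+1$), i.e.\ $\HH_i$ (resp.\ $\GG_i$) is minimal.

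For the ``if'' directions I would induct on $l$. When $l=1$ one has $\HH_1=\Ss_{m-1}$ and $\GG_1=\FF$, so (b) is trivial and (a) amounts to the statement that a reflexive extension $0\to\Ss_1\to\FF\to\Ss_{m-1}\to0$ with $\sing\FF=\{p\}$ is minimal $m$-tail; here one tracks the connecting maps $\delta_t\colon H^{n-1}(\Ss_{m-1}(t))\to H^n(\Ss_1(t))$ and uses the linear-form isomorphisms $H^{n-1}(\Ss_{m-1}(t-1))\xrightarrow{\sim}H^{n-1}(\Ss_{m-1}(t))$ for $t\le-n-1$ (the Remark following Proposition \ref{tangent}) to pin $\delta_t$ down from $\delta_{-n-1}=0$. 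For $l>1$, the induction hypothesis applied to each $\HH_i$ (which has parameter $l-1$) identifies ``$\HH_i$ is minimal'' with ``$\GG_j$ is minimal for every $j\neq i$''; imposing this for all $i$ and combining with the two families of exact sequences and the length identities above forces every $\kappa_i=0$, hence $\ell(\sing\FF)=m$, which yields the remaining half of (a) and simultaneously of (b). The step I expect to be the real obstacle is exactly this last one: showing that the corrections $\kappa_i$ vanish once \emph{all} the $\HH_i$ are already minimal. A single extension $0\to\Ss_1\to\FF\to\HH_i\to0$ with $\HH_i$ minimal does not by itself force $\kappa_i=0$ --- witness the $3$-chain example after Proposition \ref{extdifpoint}, which is not a $3$-tail sheaf although it is built by extensions of minimal tail sheaves --- so the argument must genuinely exploit the full diagonal block shape of $A$ and the simultaneous minimality of every $\HH_i$, i.e.\ it must propagate minimality along every copy of $\Ss_1$ sitting inside $\FF$.
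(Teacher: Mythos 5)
Your framework is sound as far as it goes: the criterion ``minimal $r$-tail $\Leftrightarrow\ \ell(\sing)=r$'' together with the a priori bound $\ell(\sing)\le r$ (obtained from the restriction $T^{\,r}_{\PP^{n-1}}$ to a general hyperplane and the resulting injectivity of multiplication by a linear form on $H^{n-1}$), and the length identity $\ell(\sing\FF)=\ell(\sing\HH_i)+1-\kappa_i$ coming from the dualized sequence, do give both ``only if'' implications; this is essentially the paper's own length count on the $\mathcal{E}xt^1$ sheaves, just made more explicit. The reduction of (b) to (a) by iterated row deletion also matches the paper.

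The gap is the ``if'' direction, and it is twofold. First, your base case is false: for $l=1$ the claim that every reflexive extension $0\to\Ss_1\to\FF\to\Ss_{m-1}\to0$ with $\sing\FF=\{p\}$ is minimal $m$-tail is contradicted by the very $3$-chain example you cite (there $m=3$, the first row is $[x\ y\ z\ 0\cdots 0]$, the remaining $2\times 6$ block is a minimal $\Ss_2$, $\sing\FF=\{[0:0:0:1]\}$, yet $h^2(\FF(-6))=2<3$); the connecting map $\delta_{-6}$ is genuinely nonzero there, so no argument ``pinning down $\delta_t$ from $\delta_{-n-1}=0$'' can succeed, and consequently the induction built on this base case collapses. (Read literally at $l=1$, part (a) only has content in the forward direction; the paper's converse argument is vacuous there because it presupposes the tail-ness of $\GG_1=\FF$.) Second, for $l\ge 2$ you leave the decisive step --- forcing $\kappa_i=0$ once all the $\HH_j$ are minimal --- as an acknowledged obstacle, and this is exactly where the paper's proof has its one real idea, which you are missing. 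Namely: from the minimality of any $\HH_j$ with $j\ne i$ and the already-established forward implication, the two-block sheaf $\GG_i=\coker(C_i^t)$ is itself a minimal $(m-l+1)$-tail sheaf; by the length count of Proposition \ref{extdifpoint} its dual sequence $0\to\Ss_{m-l}^\lor\to\GG_i^\lor\to\Ss_1^\lor\to 0$ is then exact, so $\GG_i^\lor\to\Ss_1^\lor$ is onto. Since this map is the composite $\GG_i^\lor\hookrightarrow\FF^\lor\to\Ss_1^\lor$, the map $\FF^\lor\to\Ss_1^\lor$ is onto as well, i.e.\ the connecting map $\Ss_1^\lor\to\mathcal{E}xt^1(\HH_i,\OO_{\PP^n})$ vanishes and $\kappa_i=0$; the paper packages this as the dualization of a $3\times 3$ diagram with rows $\Ss_1\to\FF\to\HH_i$ and $\Ss_1\to\GG_i\to\Ss_{m-l}$ followed by the snake lemma, producing $0\to\HH_i^\lor\to\FF^\lor\to\Ss_1^\lor\to 0$ and hence $\ell(\sing\FF)=(m-1)+1=m$. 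This is precisely how the ``full diagonal block shape'' enters, and it is the ingredient your induction needs in place of the false $l=1$ base case.
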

\begin{proof}
(a) If there exists one $\HH_i$ which is not tail, according to the exact sequence
 \[0 \rightarrow \Ss_1 \rightarrow \FF \rightarrow \HH_i \rightarrow 0 \]
 the sheaf $\FF$ would not be tail. Indeed, if $\HH_i$ is not $m-1$ tail it means that its $(n-1)$-th cohomology group decreases when we twist by subsequent negative degrees, forcing $\FF$ not to be $m$-tail. Hence we only need to prove the converse. Assume that for $1 \leq i \leq l$, $\HH_i$ is tail. Observe that for any $1 \leq i \leq l$, $\GG_i$ is tail and consider the commutative diagram
 $$
 \xymatrix{
 & & 0\ar[d] &  0 \ar[d]&  & \\
 & 0 \ar[d] \ar[r] & \Ss_1^{l-1}  \ar[d] \ar[r]& \Ss_{1}^{l-1} \ar[d] \ar[r] &  0 \\
 0 \ar[r] & \Ss_1  \ar[d] \ar[r]  & \FF \ar[d] \ar[r] & \HH_i \ar[d] \ar[r] & 0 \\
  0 \ar[r] & \Ss_1  \ar[d] \ar[r] & \GG_i \ar[d] \ar[r] & \Ss_{m-l} \ar[d] \ar[r] & 0 \\
  &  0&  0& 0 &  }
  $$
Since $\GG_i$ and $\HH_i$ are both tail reflexive sheaves, dualizing we get the following diagram:
 $$
 \xymatrix{
 & 0\ar[d] & 0\ar[d] &  0 \ar[d]&  & \\
 0 \ar[r] & \Ss_{m-l}^{\vee} \ar[d] \ar[r] & \GG_i^{\vee}   \ar[d] \ar[r]& \Ss_1^{\vee} \ar[d] \ar[r] &  0 \\
 0 \ar[r] &\HH_i^{\vee}  \ar[d] \ar[r]^{\alpha}  & \FF^{\vee} \ar[d] \ar[r] & \coker(\alpha) \ar[d] \ar[r] & 0 \\
  0 \ar[r] & {\Ss_1^{l-1}}^{\vee}  \ar[d] \ar[r]^{\simeq} & {\Ss_1^{l-1}}^{\vee} \ar[d] \ar[r] & 0 \\
  &  0&  0 &  }
  $$
By the snake lemma, $\coker(\alpha) \cong \Ss_1^{\vee}$. Therefore, since we have the short exact sequence
\[ 0 \rightarrow \HH_i^{\vee} \rightarrow \FF^{\vee} \rightarrow \Ss_1^{\vee} \rightarrow 0 \]
we get that $\FF$ is a minimal $m$-tail reflexive sheaf.

(b) Because of the definition of $\GG_i$, the result follows by applying $(l-1)$ times item $(a)$ to $\FF$, each time deleting one row and one block of columns.

\end{proof}

The previous result tells us that each $\Ss_m$, with $\sing(\Ss_m)=\{ p \}$ is obtained as the extension, described by a matrix of type (\ref{matess}) of an $\Ss_l$, also having $\sing(\Ss_l)=\{ p \}$, with $\Ss_1^{m-l}$, again having $\sing(\Ss_1)=\{ p \}$. This means that if we want to classify minimal tail reflexive sheaves, we have reduced the classification family to a simpler family of sheaves. Indeed, keeping the above notations, if $\FF$ is a minimal $m$-tail sheaf, every matrix of type $C_i$, obtained from the matrix defining $\FF$, also defines a minimal tail sheaf. Hence, if the rank of the new matrix drops by more than one when evaluate at the singular point of the associated sheaf, we can repeat the argument starting with the matrix $C_i$. Iterating the process, we will consider smaller and smaller matrices, until we arrive at a matrix $A$ with at most one row with the only non-zero coordinates given by the linear forms defining $p$.
In other words, repeating the argument we can reduce to the case of the minimal $m$-tail reflexive sheaves defined as
$$
0 \rightarrow \OO_{\PP^n}^m \stackrel{A}{\rightarrow} \OO_{\PP^n}(1)^{nm} \rightarrow \Ss_m \rightarrow 0
$$
with only one singular point $p$, such that the rank of the defining matrix $A$ drops only
by one when evaluated at the singular point $p$, i.e. $\rk A(p) = m-1$.\\

From now on we will assume that  $p=(0:\ldots:0:1)$ and we will denote by  $\EE_{\Ss_m}= \EE xt^1(\Ss_m,\OO_{\PP^n})$ so that we have the exact sequence
$$
0 \rightarrow \Ss_m^\lor \rightarrow \OO_{\PP^n}(-1)^{nm} \stackrel{A^t}{\rightarrow} \OO_{\PP^n}^m \rightarrow
\EE_{\Ss_m} \rightarrow 0.
$$

\vspace{3mm}
The next results will show us that the problem of classifying those minimal $m$-tail  reflexive sheaves with only one singular point $p$ and whose rank of the defining matrix drops only by one when evaluated at $p$ is equivalent to the problem of classifying fat points of length $m$ whose ideal has the radical associated to a simple point.

Recall the following result (see \cite{BuchEis}, Pag. 231)
\begin{Theorem}
 Let $M$ a finitely generated torsion module over a commutative ring $R$ with a free presentation
 $$
 R^p \stackrel{\varphi}{\rightarrow} R^q \rightarrow M \rightarrow 0
 $$
 then we have
 $$
 F_0(M) \subset \ann M
 $$
where $\ann M$ denotes the annihilator of the module and $F_0(M)$ the Fitting ideal defined by the maximal minors of a matrix for $\varphi$.
 \end{Theorem}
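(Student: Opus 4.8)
The plan is to derive the inclusion $F_0(M)\subseteq \ann M$ directly from Cramer's rule applied to a presentation matrix, so no deep input beyond linear algebra over $R$ is needed. Fix bases of $R^p$ and $R^q$ and let $A$ be the $q\times p$ matrix of $\varphi$ with respect to them, so that the columns of $A$ are the images $\varphi(f_1),\dots,\varphi(f_p)$ of the standard basis of $R^p$; write $\pi\colon R^q\to M$ for the given surjection, so that $\pi\circ\varphi=0$ and the classes $\pi(e_1),\dots,\pi(e_q)$ generate $M$. Recall that $F_0(M)$ is the ideal generated by the $q\times q$ minors of $A$ (the maximal minors, in the case $p\geq q$ relevant here); one first records the standard fact that this ideal is independent of the chosen free presentation, so that the statement is meaningful, and observes that if $p<q$ then there are no $q\times q$ minors, $F_0(M)=(0)$, and the inclusion is trivial. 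Thus we may assume $p\geq q$.

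It then suffices to show that every $q\times q$ minor $\delta$ of $A$ annihilates each generator $\pi(e_j)$. So I would fix a choice of $q$ columns of $A$, assembling them into a $q\times q$ submatrix $B$ with columns $b_1,\dots,b_q\in R^q$; since each $b_k$ is a column of $A$ it lies in $\im\varphi=\ker\pi$, hence $\pi(b_k)=0$. The adjugate identity $B\cdot\operatorname{adj}(B)=\det(B)\,I_q$, read column by column, gives for each $j$
\[
\det(B)\,e_j \;=\; \sum_{k=1}^{q} \bigl(\operatorname{adj}(B)\bigr)_{kj}\, b_k ,
\]
an $R$-linear combination of the $b_k$'s. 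Applying $\pi$ yields $\det(B)\,\pi(e_j)=\sum_k(\operatorname{adj}(B))_{kj}\,\pi(b_k)=0$ for every $j$, and since the $\pi(e_j)$ generate $M$ we get $\det(B)\,M=0$, i.e. $\det(B)\in\ann M$. As the $\det(B)$, ranging over all choices of $q$ columns, generate $F_0(M)$, this proves $F_0(M)\subseteq\ann M$.

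The computation is routine and there is no genuine obstacle; the only points deserving a word of care are the well-definedness of $F_0(M)$ independently of the presentation (a standard property of Fitting ideals, which could simply be cited) and the precise bookkeeping in the adjugate identity, namely that the coefficients expressing $\det(B)\,e_j$ through the columns of $B$ are exactly the entries of the classical adjoint $\operatorname{adj}(B)$. I would also note that the inclusion $F_0(M)\subseteq\ann M$ in fact uses only that $M$ is finitely generated; the torsion hypothesis in the statement is there merely to ensure $F_0(M)$ is a nonzero (and depth-sensitive) ideal, which is what makes the result useful in the subsequent reduction to the classification of fat points.
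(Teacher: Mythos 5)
Your argument is correct: the adjugate identity $B\cdot\operatorname{adj}(B)=\det(B)I_q$ applied to any $q\times q$ submatrix $B$ whose columns lie in $\ker\pi$ immediately gives $\det(B)\,\pi(e_j)=0$ for all $j$, hence $\det(B)\in\ann M$, and these determinants generate $F_0(M)$. The paper does not actually prove this statement --- it is quoted verbatim from Buchsbaum--Eisenbud \cite{BuchEis} and used as a black box to conclude that $\Supp\,\EE xt^1(\Ss_m,\OO_{\PP^n})$ is contained in the zero scheme of the maximal minors of the presentation matrix --- so there is no in-paper proof to compare against; what you have written is the standard Cramer's-rule proof of the easy inclusion of Fitting's theorem, and your two side remarks (well-definedness of $F_0(M)$ can be cited, and the torsion hypothesis is not needed for the inclusion itself) are both accurate.
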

As a direct consequence of the theorem, we have that the support of $\EE_{\Ss_m}$ is a closed subscheme of the 0-dimensional scheme whose ideal
is defined by the maximal minors of the matrix $A$. Let us observe that we are sure that the last ideal defines a 0-dimensional scheme because it always contains the polynomials $x_0^m,\ldots,x_{n-1}^m$, indeed, the matrix defining the minimal tail sheaves we are focusing now can be expressed as
\vspace{3mm}
$$
 \left[
\begin{array}{ccccccccccccccc}
X & 0 & 0 & 0 & \cdots & 0\\
* & X & 0 & 0 & \cdots & 0\\
* &* & X & 0 & \cdots& 0\\
\vdots & \vdots& & \ddots &\ddots & \vdots\\
* & * &  & & X & 0\\
* & * & * & \cdots & * & X
\end{array}
\right] \:\:\:\:\mbox{with}\:\:\:\:\: X = \left[x_0\:\: x_1\:\:\ldots\:\: x_{n-1}\right].
$$

\vspace{3mm}

Therefore the support of $\EE_{\Ss_m}$ is given by the fat point $\tilde{P}$ obtained by ``adding directions'' to $P$. Because of the rank hypothesis, we have that, considered as a skyscraper sheaf on $\tilde{P}$,
$$
0 \leq \rk \EE_{\Ss_m} \leq 1.
$$
This gives us an inclusion of sheaves $\EE_{\Ss_m} \hookrightarrow \OO_{\tilde{P}}$, the last sheaf denoting the structure sheaf of the fat point. Because $h^0(\EE_{\Ss_m})=m$ by hypothesis, we conclude that $\EE_{\Ss_m}$ is the structural sheaf of a fat point of length $m$.

Let us now prove the other direction.

 \begin{Theorem}
 There is a one to one correspondence between the subschemes of $\PP^n$ of length $m$ supported at one point $p$ and the minimal $m$-tail reflexive sheaves
 $\Ss_m$ with $\sing(\Ss_m)$= \{p\} and such that the rank of its defining matrix drops by one when evaluated at $p$.
\end{Theorem}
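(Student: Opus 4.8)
The forward direction has already been established above: an $\Ss_m$ as in the statement is sent to the length-$m$ subscheme $Z$ with structure sheaf $\OO_Z\cong\EE_{\Ss_m}=\mathcal{E}xt^1(\Ss_m,\OO_{\PP^n})$, and the hypothesis that the defining matrix drops rank by exactly one at $p$ is precisely what makes $\EE_{\Ss_m}$ a cyclic $\OO_{\PP^n,p}$-module, hence genuinely the structure sheaf of a subscheme. What remains is to construct the inverse map and to check that the two maps are mutually inverse.

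The plan for the inverse is to reverse the syzygy of $\OO_Z$. Fix $Z\subset\PP^n$ of length $m$ supported at $p$; then $\OO_Z$ is a skyscraper sheaf with $h^0(\OO_Z)=m$, and a choice of $\KK$-basis of $H^0(\OO_Z)$ yields a surjection $\OO_{\PP^n}^m\twoheadrightarrow\OO_Z$ which is an isomorphism on $H^0$. Let $\mathcal{K}$ be its kernel. First I would show that $\mathcal{K}$ is $1$-regular in the sense of Castelnuovo--Mumford: writing the defining sequence twisted by $1-i$ and using $H^{i-1}(\OO_Z)=0$ for $i\geq 2$, the surjectivity of $H^0(\OO_{\PP^n}^m)\to H^0(\OO_Z)$ for $i=1$, and $H^i(\OO_{\PP^n}(1-i))=0$ for $1\leq i\leq n$, one gets $H^i(\mathcal{K}(1-i))=0$ for all $i\geq 1$. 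Consequently $\mathcal{K}(1)$ is globally generated with $h^0(\mathcal{K}(1))=m(n+1)-m=nm$, one has $H^1(\mathcal{K}(j))=0$ for $j\geq 0$, and the multiplication maps $H^0(\OO_{\PP^n}(j-1))\otimes H^0(\mathcal{K}(1))\to H^0(\mathcal{K}(j))$ are surjective for $j\geq 1$. The evaluation surjection $\OO_{\PP^n}(-1)^{nm}\twoheadrightarrow\mathcal{K}$ is then given by an $m\times nm$ matrix $A$ of linear forms with $\im A=\mathcal{K}$ and $\coker A=\OO_Z$, and I would set $\Ss_m:=\coker\bigl(A^t\colon\OO_{\PP^n}^m\to\OO_{\PP^n}(1)^{nm}\bigr)$.

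Next I would verify that $\Ss_m$ lies in the target class. Since $Z$ has codimension $n\geq 3$ one has $\mathcal{E}xt^p(\OO_Z,\OO_{\PP^n})=0$ for $p\leq n-1$, so dualizing $0\to\ker A\to\OO_{\PP^n}(-1)^{nm}\xrightarrow{A}\OO_{\PP^n}^m\to\OO_Z\to 0$ produces the locally free resolution $0\to\OO_{\PP^n}^m\xrightarrow{A^t}\OO_{\PP^n}(1)^{nm}\to\Ss_m\to 0$, with $\Ss_m=(\ker A)^\lor$ reflexive and $\EE_{\Ss_m}=\coker A=\OO_Z$; in particular $\sing(\Ss_m)=\{p\}$, and tensoring $\coker A$ with the residue field $\KK(p)$ shows $A^t$ drops rank by $\dim_\KK(\OO_Z\otimes\KK(p))=1$ at $p$. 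The cohomological conditions are read off the resolution: $H^i_*(\Ss_m)=0$ for $1\leq i\leq n-2$ and $h^{n-1}(\Ss_m(t))=0$ for $t\geq -n$ are immediate from the vanishing of the relevant cohomology of line bundles; $h^{n-1}(\Ss_m(-n-1))=m$ since $H^n(\OO_{\PP^n}(-n)^{nm})=0$; and for $t=-n-1-j$ with $j\geq 1$, Serre duality identifies $h^{n-1}(\Ss_m(t))$ with $\dim\coker\bigl[H^0(\OO_{\PP^n}(j-1)^{nm})\xrightarrow{A}H^0(\OO_{\PP^n}(j)^m)\bigr]$; splicing $0\to\mathcal{K}(j)\to\OO_{\PP^n}(j)^m\to\OO_Z\to 0$ with $0\to(\ker A)(j)\to\OO_{\PP^n}(j-1)^{nm}\to\mathcal{K}(j)\to 0$ and invoking $H^1(\mathcal{K}(j))=0$ together with the surjectivity of the multiplication maps (equivalently $H^1((\ker A)(j))=0$) forces this dimension to be exactly $m$. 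Hence $\Ss_m$ is a normalized $m$-tail reflexive sheaf of rank $(n-1)m$, so minimal by Lemma \ref{lemmarank}.

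Finally I would check that the two maps are inverse to one another. One composite is clear, since $\EE_{\Ss_m}=\OO_Z$ by construction. For the other, take a minimal $m$-tail $\Ss_m$ with $\sing(\Ss_m)=\{p\}$ and rank dropping by one at $p$, set $\OO_Z:=\EE_{\Ss_m}$, and run the construction; the point is that the resolution of $\Ss_m$ already realizes the canonical maps used above. Indeed, dualizing it gives $0\to\Ss_m^\lor\to\OO_{\PP^n}(-1)^{nm}\xrightarrow{A}\OO_{\PP^n}^m\xrightarrow{\pi}\OO_Z\to 0$, and the vanishings $h^1(\Ss_m^\lor)=h^0(\Ss_m^\lor(1))=h^1(\Ss_m^\lor(1))=0$ — which follow from ${\Ss_m}_{|H}\cong T^m_{\PP^{n-1}}$ exactly as in the proof of Proposition \ref{tangent} — force $\pi$ to be an isomorphism on $H^0$ and $\OO_{\PP^n}(-1)^{nm}\to\ker\pi$ to be an isomorphism on $H^0$ after twisting by $1$; thus $\ker\pi$ coincides with $\mathcal{K}$ and $\Ss_m^\lor$ with $\ker A$ up to the change of basis left undetermined by the construction, so running the construction on $Z$ returns $(\ker A)^\lor=\Ss_m^{\lor\lor}=\Ss_m$. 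I expect the cohomological step of the third paragraph to be the main obstacle: the delicate point is proving that $h^{n-1}(\Ss_m(t))$ is constant equal to $m$ for \emph{every} $t\leq -n-1$, rather than merely bounded above by $m$ or equal to $m$ for $t\ll 0$, and this is exactly where the $1$-regularity of $\mathcal{K}$ and the surjectivity of the multiplication maps are needed.
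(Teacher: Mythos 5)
Your proposal is correct, and while it produces the same object as the paper --- the dual of the first syzygy of $\OO_Z$ in a linear presentation $\OO_{\PP^n}(-1)^{nm}\to\OO_{\PP^n}^m\to\OO_Z\to 0$ --- it gets there and verifies the tail condition by genuinely different means. The paper invokes the Beilinson spectral sequence to produce the full linear resolution $0\to\OO_{\PP^n}(-n)^m\to\cdots\to\OO_{\PP^n}(-1)^{nm}\to\OO_{\PP^n}^m\to\OO_{\tilde P}\to 0$, and then shows the dual $\Kk_1^\lor$ of the first syzygy is a minimal $m$-tail sheaf by checking $h^0(\Kk_1(1))=0$, deducing that its restriction to a general hyperplane is $T_{\PP^{n-1}}^m$ as in Proposition \ref{tangent}, and propagating the tail through the restriction sequence. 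You instead observe that the kernel $\mathcal{K}$ of $\OO_{\PP^n}^m\to\OO_Z$ is $1$-regular in the Castelnuovo--Mumford sense, which simultaneously hands you the linear presentation (global generation of $\mathcal{K}(1)$ with $h^0=nm$) and, via Serre duality plus the surjectivity of the multiplication maps $H^0(\OO_{\PP^n}(j-1))\otimes H^0(\mathcal{K}(1))\to H^0(\mathcal{K}(j))$, the exact equality $h^{n-1}(\Ss_m(-n-1-j))=m$ for all $j\ge 1$ without ever restricting to a hyperplane; you correctly identify this as the delicate point, and your regularity argument does close it. You are also more explicit than the paper about why the two assignments are mutually inverse, using $h^1(\Ss_m^\lor)=h^0(\Ss_m^\lor(1))=h^1(\Ss_m^\lor(1))=0$ to match the dualized resolution of a given $\Ss_m$ with the canonical presentation built from $Z$ (only note that the last of these vanishings is not literally proved in Proposition \ref{tangent}; it follows from the restriction sequence together with $H^1(\Omega^1_{\PP^{n-1}}(1))=0$). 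In short, the paper's verification is geometric (restriction to hyperplanes and recognition of the tangent bundle), yours is homological (regularity and multiplication maps); both are valid, and yours is somewhat more self-contained since it does not rely on Beilinson's theorem.
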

\begin{proof}
We have already seen that the minimal $m$-tail satisfying the required rank conditions have as singular locus a fat point of length $m$.\\
 Let us now consider $\tilde{P}$ be a fat point of length $m$ whose ideal has the radical associated to one simple point $p$ (so we think of $\tilde{P}$ as $(0:\ldots:0:1)$ plus directions).\\
 Consider the structural sheaf $\OO_{\tilde{P}}$ which, using the Beilinson spectral sequence, has the following resolution
 \begin{equation}\label{res:princ}
 0 \rightarrow \OO_{\PP^{n}}(-n)^m \rightarrow \OO_{\PP^{n}}(-n+1)^{mn} \rightarrow \cdots \rightarrow \OO_{\PP^{n}}(-1)^{mn} \rightarrow \OO_{\PP^{n}}^m \rightarrow \OO_{\tilde{P}} \rightarrow 0
 \end{equation}
 We divide it  into the following exact sequences
 \begin{equation} \label{seq:split}
 \begin{array}{c}
 0 \rightarrow \Kk_1 \rightarrow \OO_{\PP^{n}}(-1)^{mn} \rightarrow \OO_{\PP^{n}}^m \rightarrow \OO_{\tilde{P}} \rightarrow 0 \vspace{0.3cm}\\
 0 \rightarrow \Kk_2 \rightarrow \OO_{\PP^{n}}(-2)^{\binom{n}{2}m} \rightarrow \Kk_1 \rightarrow 0\\
 \vdots\\
 0 \rightarrow \Kk_i \rightarrow \OO_{\PP^{n}}(-i)^{\binom{n}{i}m} \rightarrow \Kk_{i-1} \rightarrow 0\\
 \vdots\\
 0 \rightarrow \OO_{\PP^{n}}(-n)^m \rightarrow \OO_{\PP^{n}}(-n+1)^{mn} \rightarrow \Kk_{n-2} \rightarrow 0.
 \end{array}
 \end{equation}
 Dualizing the first sequence, we get
 $$
 0 \rightarrow \OO_{\PP^{n}}^m \rightarrow \OO_{\PP^{n}}(1)^{mn} \rightarrow \Kk_1^\lor \rightarrow 0.
 $$
 Our goal is to prove that $\Kk_1^\lor$ is a minimal $m$-tail reflexive sheaf. Notice that $\EE xt^1(\Kk_1^\lor,\OO_{\PP^{n}}) = \OO_{\tilde{P}}$. Therefore  from the local to global spectral sequence we get that
 $$
h^{n-1}(\Kk_1^\lor(\alpha))=m \mbox{   for   } \alpha<<0.
 $$
 By definition we also have  $h^{n-1}(\Kk_1^\lor(-n-1))=m$. Thus it is enough  to prove that the restriction of $\Kk_1^\lor$ to an hyperplane $H$ not containing the fat point is isomorphic to $m$ copies of the tangent bundle on $\PP^{n-1}$.
As in the proof of Proposition \ref{tangent},  this is true if and only if $H^0(\Kk_1(1)_{|H})=0$ and this follows observing that,  being defined in the resolution of the structure sheaf of the fat point, we have that $h^1(\Kk_1) = h^0(\Kk_1(1))=0$, which gives us the required vanishing.\\

 Henceforth, using as before the following short exact sequence
 $$
 0 \rightarrow \Kk^\lor_1(-1) \rightarrow \Kk^\lor_1 \rightarrow T_{\PP^{n-1}}^m \rightarrow 0
 $$
 we obtain that $\Kk^\lor_1 \simeq \Ss_m$, a minimal $m$-tail reflexive sheaf. Obviously, the rank of the matrix defining the sheaf drops by one when evaluated in  $p=(0:\ldots:0:1)$, or else we would not have $\EE xt^1(\Ss_m,\OO_{\PP^{n}}) = \OO_{\tilde{P}}$.
\end{proof}

Unfortunately, nowadays, this classification problem is out of range. In fact it is also related to the classification of
finite rank $n$ commutative $\KK$-algebras. It is known that if $n \leq 6$ there are finitely many types up to isomorphism, while the isomorphism
classes are of infinite number if $n \geq 7$. The complete list for rank up to 6 can be found in \cite{Poo}.
\begin{Examples}
 (a) If the fat point, in the projective space $\PP^n$, is defined by the ideal $(x_0^m,x_1,\ldots,x_{n-1})$ then the matrix $A$ of the associated
 minimal tail reflexive sheaf is the following
 $$
 A =
 \left[
 \begin{array}{cccccccccccc}
  X & 0 & \cdots & & \cdots& 0\\
  T & X & 0 & & & \vdots\\
  0 & T & X & 0 & \cdots & 0\\
  \vdots & & \ddots & \ddots & & \vdots\\
  0 &\cdots&0 & T & X & 0\\
  0 & \cdots & \cdots& 0 &  T & X
 \end{array}
 \right]
 $$
 with $X=[x_0,x_1,\ldots,x_{n-1}]$ and $T$ denotes the $1 \times n$ matrix $[x_n,0,\ldots,0]$.\vspace{0.3cm}\\
 (b) It is possible to associate to each item of Poonen's list the matrix of the associated sheaf, which will not include here for briefness.\\
 Just to give an explicit example, considering $\PP^3 = \Proj(\KK[x,y,z,t])$, take the fat point given, in the open subset defined by $\{t\neq 0\}$, by the ideal $(x^2+z^3,xy,y^2+z^3,xz,yz,z^4)$.
 The matrix of the minimal tail will be
 $$
 \left[
 \begin{array}{cccccccccccccccccccccccccccccccccccccccccccccccccccccccccccccccccccccccccccccccccccccccc}
x & y & z & 0 & 0 & 0&0&0&0&0&0&0&0&0&0&0&0&0\\
         t&0&0&x&y&z&0&0&0&0&0&0&0&0&0&0&0&0\\
         0&t&0&0&0&0&x&y&z&0&0&0&0&0&0&0&0&0\\
         0&0&t&0&0&0&0&0&0&x&y&z&0&0&0&0&0&0\\
0&0&0&0&0&0&0&0&0&0&0&t&x&y&z&0&0&0\\
0&0&0&t&0&0&0&t&0&0&0&0&0&0&t&x&y&z
\end{array}
\right]
$$
\end{Examples}


\section{Final remarks}

We end the paper with some remarks concerning non minimal $m$-tail reflexive sheaves. To do so, we start with what we call level $m$-tail reflexive sheaves.

\begin{Definition}\label{deflevel}
Let $\FF$ be an $m$-tail reflexive sheaf on $\PP^n$. We will say that $\FF$ is {\em level} if it can be
defined by a short exact sequence
\begin{equation}
0 \rightarrow \OO_{\PP^n}^m \rightarrow \bigoplus_{j=1}^q \OO_{\PP^n}(b_j) \rightarrow \FF \rightarrow 0,
\end{equation}
with $b_j \geq 1$.
\end{Definition}

For later conveniences, we will  rewrite the resolution of a level $m$-tail reflexive sheaf as
\begin{equation}\label{deflevel2}
0 \rightarrow \OO_{\PP^n}^m \rightarrow \OO_{\PP^n}(1)^{p} \oplus \bigoplus_{j=1}^{q'} \OO_{\PP^n}(b_j) \rightarrow \FF \rightarrow 0
\end{equation}
with $b_j \geq 2$. Notice that by the proof of Lemma \ref{lemmarank}, $p \geq nm$.

We first observe that any minimal $m$-tail reflexive sheaf is level and according to Example \ref{examples} (c) and (d) not all $m$-tail reflexive sheaves are level. We also notice that the following property holds:

\begin{Lemma} \label{split} Assume that $\FF$  is a normalized $m$-tail reflexive sheaf given by an extension of the following type
\[e: \quad  0 \rightarrow  \OO_{\PP^n}(a) \rightarrow \FF \rightarrow \Ss_m \rightarrow 0, \quad a \geq 0. \]
Then $\FF \cong \Ss_m \oplus \OO_{\PP^n}(a)$.
\end{Lemma}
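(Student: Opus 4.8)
The plan is to show that the extension class $e\in\Ext^1(\Ss_m,\OO_{\PP^n}(a))$ vanishes. Note that this group is \emph{not} zero in general --- by Serre duality it has dimension $h^{n-1}(\Ss_m(-a-n-1))=m$ --- so the splitting cannot come from a vanishing of the whole obstruction space, but must be forced by the cohomological conditions defining an $m$-tail reflexive sheaf.

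First I would twist the extension $e$ by $\OO_{\PP^n}(-a-n-1)$, getting
\[ 0 \rightarrow \OO_{\PP^n}(-n-1) \rightarrow \FF(-a-n-1) \rightarrow \Ss_m(-a-n-1) \rightarrow 0, \]
and take the associated long exact sequence in cohomology. Since $n\geq 2$ one has $H^{n-1}(\OO_{\PP^n}(-n-1))=0$, so this gives an isomorphism $H^{n-1}(\FF(-a-n-1))\cong\ker\delta$, where $\delta\colon H^{n-1}(\Ss_m(-a-n-1))\rightarrow H^n(\OO_{\PP^n}(-n-1))\cong\KK$ is the connecting homomorphism.

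Now the crucial point is a dimension count. Because $a\geq 0$ we have $-a-n-1\leq -n-1$, so the (normalized) $m$-tail hypothesis applied to $\FF$ and to $\Ss_m$ gives $h^{n-1}(\FF(-a-n-1))=m=h^{n-1}(\Ss_m(-a-n-1))$; hence $\ker\delta$ is all of $H^{n-1}(\Ss_m(-a-n-1))$ and $\delta=0$. To conclude, recall that for a short exact sequence $0\rightarrow A\rightarrow B\rightarrow C\rightarrow 0$ with class $\varepsilon\in\Ext^1(C,A)$ the connecting maps of the long exact sequence of $\Hom(\OO_{\PP^n},-)$ are Yoneda composition with $\varepsilon$; thus $\delta$ is composition with $e$, viewed in $\Ext^1(\Ss_m(-a-n-1),\OO_{\PP^n}(-n-1))=\Ext^1(\Ss_m,\OO_{\PP^n}(a))$, landing in $\Ext^n(\OO_{\PP^n},\OO_{\PP^n}(-n-1))=H^n(\OO_{\PP^n}(-n-1))\cong\KK$. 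By Serre duality the corresponding composition pairing
\[ \Ext^1(\Ss_m,\OO_{\PP^n}(a))\times H^{n-1}(\Ss_m(-a-n-1))\longrightarrow \KK \]
is perfect, and its restriction to $\{e\}\times H^{n-1}(\Ss_m(-a-n-1))$ is exactly $\delta=0$; therefore $e=0$, the sequence $e$ splits, and $\FF\cong\Ss_m\oplus\OO_{\PP^n}(a)$.

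The step I expect to require the most care is the clean identification of $\delta$ with the Serre-dual pairing against $e$: one must combine the Yoneda description of connecting homomorphisms with the compatibility of Yoneda products and the trace isomorphism $H^n(\OO_{\PP^n}(-n-1))\cong\KK$ in Serre duality. As a consistency check, in the case $a=0$ the splitting is also visible directly: a non-split extension would, after the horseshoe construction and cancelling the now-constant off-diagonal block, give $\FF$ a minimal resolution $0\rightarrow\OO_{\PP^n}^{m-1}\rightarrow\OO_{\PP^n}(1)^{nm}\rightarrow\FF\rightarrow 0$, so that $h^{n-1}(\FF(-n-1))=m-1$, contradicting that $\FF$ is $m$-tail.
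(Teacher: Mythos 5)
Your proof is correct, and it is essentially the Serre-dual formulation of the paper's argument. The paper applies the contravariant functor $\Hom(-,\OO_{\PP^n}(a))$ to $e$ and reads off the tail piece
$\KK \stackrel{\alpha}{\rightarrow} \Ext^1(\Ss_m,\OO_{\PP^n}(a)) \stackrel{\beta}{\rightarrow} \Ext^1(\FF,\OO_{\PP^n}(a)) \rightarrow 0$;
since $\alpha(1)=e$ and (by Serre duality and the normalized $m$-tail hypothesis on both $\FF$ and $\Ss_m$) both $\Ext^1$ groups have dimension $m$, the surjection $\beta$ is an isomorphism, forcing $\alpha=0$ and hence $e=0$. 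You instead apply the covariant functor $H^*(\,\cdot\,)$ to the twist of $e$ by $\OO_{\PP^n}(-a-n-1)$ and use the same numerical input $h^{n-1}(\FF(-a-n-1))=h^{n-1}(\Ss_m(-a-n-1))=m$ to kill the connecting map $\delta$, then recover $e=0$ via the identification of $\delta$ with the Yoneda product against $e$ and the perfectness of the Serre duality pairing. Both routes hinge on the identical dimension count; the trade-off is that the paper's version needs only exactness of the $\Ext$ sequence plus Serre duality as a dimension statement, whereas yours additionally invokes the (standard, but not free) compatibilities "connecting homomorphism $=$ Yoneda composition with the extension class" and "Serre duality $=$ Yoneda pairing followed by the trace map" --- you correctly flag this as the delicate step. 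In exchange, your version makes the mechanism more visible: a nonzero class $e$ would literally cut $h^{n-1}(\FF(-a-n-1))$ down to $m-1$, contradicting the tail condition, which is also the content of your $a=0$ sanity check.
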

\begin{proof} Applying the contravariant functor $\Hom(-, \OO_{\PP^n}(a))$ to the extension $e$ we get the long exact sequence:
\[ \cdots \rightarrow \Hom(\OO_{\PP^n}(a), \OO_{\PP^n}(a)) \cong \KK \stackrel{\alpha}{\rightarrow}  \Ext^1(\Ss_m, \OO_{\PP^n}(a)) \stackrel{\beta}{\rightarrow}   \Ext^1(\FF, \OO_{\PP^n}(a)) \rightarrow 0. \]
Since it is exact, $\beta \circ \alpha =0$. On the other hand $\alpha$ sends $1 \in \KK$ to the extension $e$ and $\beta$ is an isomorphism. Hence $e=0$ which implies that $\FF \cong \Ss_m \oplus \OO_{\PP^n}(a)$.
\end{proof}

In the next theorem we will see that we can reduce the description of level  $m$-tail reflexive sheaves to the description of minimal $m$-tail reflexive sheaves. Indeed, we have

\begin{Theorem}
\label{level}
Let $\FF$ be a level $m$-tail reflexive sheaf, whose resolution is described as in (\ref{deflevel2}). Then $$\FF \simeq \Ss_m \oplus \OO_{\PP^n}(1)^{p-nm} \oplus \bigoplus_{j=1}^{q'} \OO_{\PP^n}(b_j).$$
\end{Theorem}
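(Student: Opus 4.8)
The plan is to realise $\FF$ as an extension of a minimal $m$-tail reflexive sheaf by a direct sum of line bundles with positive twists, and then to show that this extension splits for purely cohomological reasons.

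For the first part I would rerun, in the level setting, the diagram chase from the proof of Lemma~\ref{lemmarank}. Writing the resolution (\ref{deflevel2}) as $M^t\colon\OO_{\PP^n}^m\hookrightarrow\OO_{\PP^n}(1)^{p}\oplus\bigoplus_{j=1}^{q'}\OO_{\PP^n}(b_j)$, the source is now just $\OO_{\PP^n}^m$, so the portion of that diagram coming from positive twists in the source is absent and the ``extension piece'' is simply the sum of line bundles $\mathcal{L}:=\OO_{\PP^n}(1)^{p-nm}\oplus\bigoplus_{j=1}^{q'}\OO_{\PP^n}(b_j)$. Concretely, split $\OO_{\PP^n}(1)^{p}=\OO_{\PP^n}(1)^{nm}\oplus\OO_{\PP^n}(1)^{p-nm}$ so that the induced map $A_m^t\colon\OO_{\PP^n}^m\xrightarrow{M^t}\OO_{\PP^n}(1)^{nm}$ is still injective — possible because, exactly as in Lemma~\ref{lemmarank}, the equality $h^{n-1}(\FF(-n-2))=m$ forces $nm$ independent copies of $\OO_{\PP^n}(1)$ into the target onto which $M^t$ is already injective — and apply the Snake Lemma to obtain $0\to\mathcal{L}\to\FF\to\GG\to 0$ with $\GG:=\coker(\OO_{\PP^n}^m\xrightarrow{A_m^t}\OO_{\PP^n}(1)^{nm})$. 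Then I would check that $\GG$ is a minimal $m$-tail reflexive sheaf $\Ss_m$: it has rank $(n-1)m$ and a resolution of the shape (\ref{resmin}); since $\mathcal{L}$ is locally free, $\sing(\GG)\subseteq\sing(\FF)$ is $0$-dimensional by Lemma~\ref{singpoint}, so $\codim\sing(\GG)\ge 3$ and $\GG$ is reflexive; and the long exact cohomology sequence of $0\to\mathcal{L}\to\FF\to\GG\to 0$, together with the fact that $\mathcal{L}$ has no intermediate cohomology and only positive twists, shows that $\GG$ satisfies (i) and (ii) with the same constant $m$.

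For the splitting, apply $\Hom(-,\mathcal{L})$ to $e\colon 0\to\mathcal{L}\to\FF\to\Ss_m\to 0$, giving the exact piece $\Hom(\mathcal{L},\mathcal{L})\xrightarrow{\partial}\Ext^1(\Ss_m,\mathcal{L})\xrightarrow{\gamma}\Ext^1(\FF,\mathcal{L})\to\Ext^1(\mathcal{L},\mathcal{L})$. Writing $\mathcal{L}=\bigoplus_t\OO_{\PP^n}(c_t)$ with each $c_t\ge 1$, we have $\Ext^1(\mathcal{L},\mathcal{L})=\bigoplus_{s,t}H^1(\OO_{\PP^n}(c_t-c_s))=0$, so $\gamma$ is surjective. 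On the other hand $\Ext^1(\FF,\OO_{\PP^n}(c_t))\cong\Ext^1(\FF(-c_t-n-1),\OO_{\PP^n}(-n-1))\cong H^{n-1}(\FF(-c_t-n-1))^{*}$, which has dimension $m$ since $-c_t-n-1\le-n-1$ and $\FF$ is normalized $m$-tail, and the identical computation gives $\dim\Ext^1(\Ss_m,\OO_{\PP^n}(c_t))=m$; summing over $t$, the groups $\Ext^1(\FF,\mathcal{L})$ and $\Ext^1(\Ss_m,\mathcal{L})$ have equal finite dimension. Hence the surjection $\gamma$ is an isomorphism, so $\partial=0$, and as $[e]=\partial(\mathrm{id}_{\mathcal{L}})$ the extension $e$ splits: $\FF\cong\Ss_m\oplus\mathcal{L}=\Ss_m\oplus\OO_{\PP^n}(1)^{p-nm}\oplus\bigoplus_{j=1}^{q'}\OO_{\PP^n}(b_j)$.

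The delicate point is the first part: one must be sure that the diagram of Lemma~\ref{lemmarank} really does produce a \emph{clean} sequence $0\to\mathcal{L}\to\FF\to\Ss_m\to 0$ whose quotient is a reflexive $m$-tail sheaf of rank exactly $(n-1)m$ — equivalently, that the degree $\ge 2$ columns of $M$ landing in $\bigoplus\OO_{\PP^n}(b_j)$ do not raise the generic rank of $M^t$, i.e. that the linear block of $M^t$ already has generic rank $m$. If this failed, one could peel off a subsheaf of the form $\coker(\OO_{\PP^n}(-d)\to\bigoplus\OO_{\PP^n}(b_j))$ whose $(n-1)$-st cohomology grows without bound under twisting by increasingly negative degrees, contradicting condition (ii) for $\FF$. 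Once this rank statement is established the remaining diagram chase is formal and the cohomological splitting above completes the argument.
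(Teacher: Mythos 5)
Your overall strategy --- peel off the positive line bundles as a subsheaf $\mathcal{L}$, identify the quotient $\GG=\coker\bigl(\OO_{\PP^n}^m\to\OO_{\PP^n}(1)^{nm}\bigr)$ with a minimal $m$-tail sheaf, and then split the extension by an $\Ext$ computation --- is the same as the paper's, and your final splitting step is correct: it packages the paper's splitting lemma for extensions $0\to\OO_{\PP^n}(a)\to\FF\to\Ss_m\to 0$ and its iteration into one clean count, $\dim\Ext^1(\Ss_m,\mathcal{L})=\dim\Ext^1(\FF,\mathcal{L})$ together with $\Ext^1(\mathcal{L},\mathcal{L})=0$. The gap is in the middle step, where you assert that the long exact cohomology sequence of $0\to\mathcal{L}\to\FF\to\GG\to 0$ ``shows that $\GG$ satisfies (i) and (ii) with the same constant $m$''. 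That sequence reads $0\to H^{n-1}(\FF(t))\to H^{n-1}(\GG(t))\to H^n(\mathcal{L}(t))\to H^n(\FF(t))$, and $H^n(\mathcal{L}(t))\neq 0$ as soon as $t\le -n-2$ (already $h^n(\OO_{\PP^n}(1+t))=\binom{-t-2}{n}>0$ there). So you only obtain $h^{n-1}(\GG(t))\ge m$, not $=m$; equality would require injectivity of $H^n(\mathcal{L}(t))\to H^n(\FF(t))$, which is not formal. This is not a removable technicality: the $3\times 9$ example accompanying Proposition \ref{extdifpoint} is an $m\times nm$ matrix of linear forms with injective transpose and $0$-dimensional degeneracy locus whose cokernel is \emph{not} a tail sheaf ($h^2$ drops from $3$ to $2$), so ``$\GG$ is a minimal $m$-tail sheaf'' is the actual heart of the theorem and must be proved. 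The paper does this by a different mechanism: because the source of the resolution of $\FF'$ is $\OO_{\PP^n}^m$, the module $H^{n-1}_*(\FF')$ is generated in a single degree, hence multiplication by any linear form is an isomorphism on $H^{n-1}$ in degrees $\le -n-1$; restricting to a generic hyperplane then forces $\FF'_{|H}\cong T^m_{\PP^{n-1}}\oplus\bigoplus_j\OO_{\PP^{n-1}}(b_j)$, from which $\EE'_{|H}\cong T^m_{\PP^{n-1}}$ and finally $\EE'\cong\Ss_m$ as in Proposition \ref{tangent}. Some argument of this strength is needed in your write-up.

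A second, smaller issue is the point you yourself flag as delicate: that the linear block of $M$ has generic rank $m$, equivalently that $\bigoplus_j\OO_{\PP^n}(b_j)\to\FF$ is injective, so that the snake-lemma quotient row really is a short exact sequence. Your sketch --- peel off $\coker\bigl(\OO_{\PP^n}(-d)\to\bigoplus_j\OO_{\PP^n}(b_j)\bigr)$ and derive unbounded growth of $h^{n-1}$ --- does not go through as stated: the kernel of the linear block is a reflexive subsheaf of $\OO_{\PP^n}^m$ of possibly higher rank, not a line bundle, and the claimed unbounded growth is not justified, since the leading terms of $h^n$ of that kernel and of $\bigoplus_j\OO_{\PP^n}(b_j+t)$ are of the same order in $t$. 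The paper proves this injectivity by an iterative argument producing independent sections in $H^0(\FF(-b_j))$ and using reflexivity and torsion-freeness of the successive quotients; you need some such argument here as well.
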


\begin{proof}
Considering the short exact sequence (\ref{deflevel2}), we get the following commutative diagram

\begin{equation}\label{diaglevel}
\xymatrix{
& & 0 \ar[d] & 0 \ar[d] & \\
& &  \bigoplus_{j=1}^{q'} \OO_{\PP^n}(b_j) \ar[d]\ar[r]^{\simeq} & \bigoplus_{j=1}^{q'} \OO_{\PP^n}(b_j) \ar[d] & \\
0 \ar[r] & \OO_{\PP^n}^m \ar[r]^<<<<<{(A \: | \: B)^t} \ar[d]_{\simeq} & \OO_{\PP^n}(1)^{p} \oplus \bigoplus_{j=1}^{q'} \OO_{\PP^n}(b_j) \ar[r] \ar[d] & \FF \ar[r]\ar[d] & 0 \\
0 \ar[r] & \OO_{\PP^n}^m \ar[r]^{A^t}  & \OO_{\PP^n}(1)^{p}\ar[r] \ar[d]  & \mathcal{E} \ar[d] \ar[r] & 0\\
& & 0 & 0
}
\end{equation}

Indeed, we consider the long exact sequence obtained by applying the functor $\Hom(\bigoplus_{j=1}^{q'} \OO_{\PP^n}(b_j), *)$ to the exact sequence (\ref{deflevel2}) 
$$
0\rightarrow \Hom(\bigoplus_{j=1}^{q'} \OO_{\PP^n}(b_j), \OO_{\PP^n}^m ) \rightarrow \Hom(\bigoplus_{j=1}^{q'} \OO_{\PP^n}(b_j), \OO_{\PP^n}(1)^{p} \oplus \bigoplus_{j=1}^{q'} \OO_{\PP^n}(b_j)) \rightarrow \Hom(\bigoplus_{j=1}^{q'} \OO_{\PP^n}(b_j), \FF) \rightarrow \cdots
$$

Since  $b_j\geq 2$ for each $j$, $\Hom(\bigoplus_{j=1}^{q'} \OO_{\PP^n}(b_j), \OO_{\PP^n}^m )=0$ and we get the following commutative diagram
\begin{equation}\label{diaglevel}
\xymatrix{
& & 0 \ar[d] & & \\
& &  \bigoplus_{j=1}^{q'} \OO_{\PP^n}(b_j) \ar[d]\ar[r]^{\simeq} & \bigoplus_{j=1}^{q'} \OO_{\PP^n}(b_j) \ar[d]^{f} & \\
0 \ar[r] & \OO_{\PP^n}^m \ar[r]^<<<<<{(A \: | \: B)^t} \ar[d]_{\simeq} & \OO_{\PP^n}(1)^{p} \oplus \bigoplus_{j=1}^{q'} \OO_{\PP^n}(b_j) \ar[r] \ar[d] & \FF \ar[r]\ar[d] & 0 \\
& \OO_{\PP^n}^m \ar[r]^{A^t}  & \OO_{\PP^n}(1)^{p}\ar[r] \ar[d]  & \mathcal{E} \ar[d] \ar[r] & 0\\
& & 0 & 0
}
\end{equation}
which comes from the Snake Lemma applied on the two first rows in the diagram.\\
We will now show that the map $f$ is injective. Consider a non zero element $s_1$ in $H^0(\FF(-b_{q'}))$. From the fact that  $\FF$ is a reflexive sheaf and that $H^0(\FF(-b_{q'}-b))=0$ for all $b>0$, we get  the short exact sequence
\begin{equation}\label{g1}
0 \rightarrow \OO_{\PP^n}(b_{q'}) \stackrel{s_1}{\rightarrow} \FF \rightarrow \GG_1 \rightarrow 0
\end{equation}
with $\GG_1$ a torsion free sheaf.\\
Consider now an element $s_2 \in H^0(\FF(-b_{q'-1}))$, independent from $s_1$ (if $b_{q'-1}\neq b_{q'}$ this is straightforward). Using the exact sequences (\ref{deflevel2}) and (\ref{g1})  we see that $s_2$ gives us a non zero section in $H^0(\GG_1(-b_{q'-1}))$ and the following commutative diagram, where the commutativity of the square is simply given by composition,
$$
\xymatrix{
& & & 0 \ar[d]\\
& & & \OO_{\PP^n}(b_{q'-1}) \ar[d]\\
0 \ar[r] & \OO_{\PP^n}(b_{q'}) \ar[r] & \FF \ar[d]^{\simeq} \ar[r] & \GG_1 \ar[d] \ar[r] & 0 \\
& & \FF \ar[r]^{g} & \GG_2 \ar[d] \\
& & & 0
}
$$
Applying again the Snake Lemma to the two columns of the previous diagram, we obtain the short exact sequence
$$
0 \rightarrow \OO_{\PP^n}(b_{q'-1}) \rightarrow \ker g \rightarrow \OO_{\PP^n}(b_{q'}) \rightarrow 0
$$
which implies that $\ker g \simeq \OO_{\PP^n}(b_{q'-1}) \oplus \OO_{\PP^n}(b_{q'})$. Using the fact that $\FF$ is a level $m$-tail sheaf,  $H^{n-1}(\FF(-n-2))=m$ 
which implies that  $p\geq nm$ and therefore  $\rk \FF = p+q'-m \geq m(n-1) + q' > q'$. Hence, we can iterate  the process and  we obtain the required injectivity for $f$. \\

Directly from Diagram (\ref{diaglevel}) we have that
$$
h^{n-1}(\FF(-n-2)) = h^{n-1}(\EE(-n-2))=m.
$$
This implies that the matrix $A$ must have exactly $nm$ linearly independent columns, or else we will not have the right dimension on the cohomology groups required by $\FF$ being $m$-tail. This means that $\EE \simeq \EE' \oplus \OO_{\PP^n}(1)^{p-nm}$ and also $\FF \simeq \FF' \oplus \OO_{\PP^n}(1)^{p-nm}$, with $\FF'$ defined by the short exact sequence
$$
0 \rightarrow \OO_{\PP^n}^m \rightarrow \OO_{\PP^n}(1)^{nm} \oplus \bigoplus_{j=1}^{q'} \OO_{\PP^n}(b_j) \rightarrow \FF' \rightarrow 0.
$$\\
Therefore, we have that the generators of the module $H^{n-1}_*(\FF')$ are concentrated in one degree, which implies, considering that $h^{n-1}(\FF'(\alpha))=m$ for every $\alpha \leq -n-1$ and $h^{n-1}(\FF'(\alpha))=0$ for every $\alpha > -n-1$, that all maps $H^{n-1}(\FF'(\alpha-1)) \stackrel{l}{\rightarrow} H^{n-1}(\FF'(\alpha))$ are isomorphisms for every $\alpha \leq -n-1$ and the choice of a linear form $l$. Taking a hyperplane $H$ not passing through $\sing(\FF)$ and the exact sequence
$$
0 \rightarrow \FF'(\alpha-1) \rightarrow \FF'(\alpha)\rightarrow \FF'_{|H}(\alpha)\rightarrow 0
$$
we get that $H^i_*(\FF'_{|H})=0$ for $i=1,\ldots,n-3$ and $H^{n-2}_*(\FF'_{|H})=H^{n-2}(\FF'_{|H}(-n-2))$ with $h^{n-2}(\FF'_{|H}(-n-2))=m$. This implies that $\FF'_{|H} \simeq T_{\PP^{n-1}}^m \bigoplus_{j=1}^{q'} \OO_{\PP^{n-1}}(\tilde{b}_j)$. Comparing with the resolution of $\FF'$, we have  $ \bigoplus_{j=1}^{q'} \OO_{\PP^{n-1}}(\tilde{b}_j) =  \bigoplus_{j=1}^{q'}\OO_{\PP^{n-1}}(b_j)$.\\
This means that the restriction $\EE'_{|H}$ is defined as the cokernel of the map
$$
\bigoplus_{j=1}^{q'}\OO_{\PP^{n-1}}(b_j) \rightarrow T_{\PP^{n-1}}^m \oplus \bigoplus_{j=1}^{q'} \OO_{\PP^{n-1}}(b_j)
$$
hence $\EE'_{|H} \simeq  T_{\PP^{n-1}}^m \oplus \coker \beta$ with $ \bigoplus_{j=1}^{q'} \OO_{\PP^{n-1}}(b_j) \stackrel{\beta}{\rightarrow}  \bigoplus_{j=1}^{q'} \OO_{\PP^{n-1}}(b_j)$.
It follows from the commutativity of the upper right square of diagram (\ref{diaglevel}) restricted to $H$ that $\beta$ is injective.
This implies that $\beta$ is actually an isomorphism, because we could ``simplify'' the matrix that represents it by eliminating summands starting from the highest degree. Therefore,  $\EE'_{|H} \simeq  T_{\PP^{n-1}}^m$, which gives us that $\EE'$ is a minimal $m$-tail reflexive sheaf, i.e. $\EE' \simeq \Ss_m$.\\ Finally we can conclude applying Lemma \ref{split} to the last column of the diagram (\ref{diaglevel}).
\end{proof}

\begin{Remark}
Notice that in the proof of Theorem \ref{level} we have seen that the restriction of a level $m$-tail reflexive sheaf, as defined by (\ref{deflevel2}), behaves nicely. Indeed, that
  $$\FF_{|\PP^{n-1}} \cong T_{\PP^{n-1}}^m \oplus \OO_{\PP^{n-1}}(1)^{p-nm} \oplus \bigoplus_{j=1}^{q'} \OO_{\PP^{n-1}}(b_j).$$
\end{Remark}


To end the paper, we can see that in addition the general case leads to many  other different situations, involving for example the classification of Steiner bundles on the projective space, another  problem not yet solved.

Consider an $m$-tail reflexive sheaf $\FF$ on $\PP^n$, defined by the resolution
$$
0 \rightarrow \OO_{\PP^n}^m \oplus \bigoplus_{i=1}^k \OO_{\PP^n} (a_i) \stackrel{M^t}{\rightarrow} \OO_{\PP^n}(1)^{p} \oplus \bigoplus_{j=1}^q \OO_{\PP^n} (b_j) \rightarrow \FF \rightarrow 0
$$
with $a_i \geq 2 $ and $b_j\geq 2$, for each $i =1, \ldots, k$ and $j =1, \ldots, r$ and $ p \geq nm$. We can assume
$$
M = \left[
\begin{array}{c|c}
A & 0 \\
\hline
B & C
\end{array}
\right]
$$
so that we get  the following commutative diagram
$$
\xymatrix{
 & 0 \ar[d] & 0  \ar[d]& 0 \ar[d]& \\
 0 \ar[r] &  \bigoplus_{i=1}^k \OO_{\PP^n} (a_i) \ar[r]^{C^t} \ar[d] &  \bigoplus_{j=1}^q \OO_{\PP^n} (b_j) \ar[r] \ar[d] & \mathcal{H} \ar[r] \ar[d] & 0 \\
 0 \ar[r] & \OO_{\PP^n}^m \oplus \bigoplus_{i=1}^k \OO_{\PP^n} (a_i) \ar[r]^{M^t} \ar[d] & \OO_{\PP^n}(1)^{p} \oplus \bigoplus_{j=1}^q \OO_{\PP^n} (b_j) \ar[r] \ar[d] & \FF \ar[r] \ar[d] & 0 \\
 0 \ar[r] & \OO_{\PP^n}^m \ar[r]^{A^t} \ar[d] & \OO_{\PP^n}(1)^{p} \ar[r] \ar[d] & \mathcal{G} \ar[r] \ar[d] & 0\\
 & 0 & 0 & 0
}
$$
By definition, $\sing(\FF)$ consists of $m$ points.  The problem here arises from the fact that we do not know how such singular points ``distribute'' on the matrices $A$ or $C$. It could happen that $\sing(\mathcal{G})$ is empty and therefore $A$ defines a vector bundle, as in Example \ref{examples} (d), and the bundle $\mathcal{G}$ is known in literature as a \emph{Steiner bundle}. The classification of Steiner bundles on the projective space is still a question with only partial answers, so the study of $m$-tail reflexive sheaves in general will depend on the  future developments in that direction.

 \begin{Remark}
  Obviously, it is possible to classify some specific situation, for example if $m=1$ and $a_i \neq a_j$ for $i\neq j$. Indeed the only
  possibilities for $\mathcal{G}$ and $\mathcal{H}$ will be twists either of the tangent bundle on $\PP^n$ or $\Ss_1$.
 \end{Remark}

\end{document}